\pdfoutput = 1

\documentclass[11pt]{article}
\usepackage{fullpage}

\usepackage[utf8]{inputenc} % allow utf-8 input
\usepackage[T1]{fontenc}    % use 8-bit T1 fonts
\usepackage{hyperref}       % hyperlinks
\usepackage{url}            % simple URL typesetting
\usepackage{booktabs}       % professional-quality tables
\usepackage{amsfonts}       % blackboard math symbols
\usepackage{nicefrac}       % compact symbols for 1/2, etc.
\usepackage{microtype}      % microtypography
\usepackage{xcolor}         % colors

\usepackage[pdftex]{graphicx}
\usepackage{subcaption}

%% For math
\usepackage{amsfonts}       % blackboard math symbols 
\usepackage{amssymb}
\usepackage{amsmath}
\usepackage{amsthm}
\usepackage{mathtools}
\usepackage{multicol}
\usepackage{multirow}

\usepackage{color}

\usepackage{array}
\usepackage{algorithm}
\usepackage{algorithmic}

\usepackage[shortlabels]{enumitem}

\usepackage[symbol]{footmisc} % footnote

\def\M{\mathcal{M}}

\def\D{\mathrm{D}}

\def\sR{{\mathbb R}}
\def\sS{{\mathbb S}}
\def\sP{{\mathbb P}}
\def\sE{{\mathbb E}}

\def\gA{{\mathcal{A}}}
\def\gB{{\mathcal{B}}}
\def\gD{{\mathcal{D}}}

\def\gN{{\mathcal{N}}}

\def\gW{{\mathcal{W}}}

\def\gS{{\mathcal{S}}}

\def\gR{{\mathcal{R}}}
\def\gL{{\mathcal{L}}}

\def\grad{{\mathrm{grad}}}

\newcommand{\trace}{\mathrm{tr}}

\newtheorem{theorem}{Theorem}
\newtheorem{lemma}{Lemma}
\newtheorem{proposition}{Proposition}

\theoremstyle{definition}
\newtheorem{definition}{Definition}

\newtheorem{remark}{Remark}

\newcolumntype{P}[1]{>{\centering\arraybackslash}p{#1}}

\title{\textbf{Differentially private Riemannian optimization}}
\author{Andi Han\footnote{University of Sydney 
  (\texttt{andi.han@sydney.edu.au}, \texttt{junbin.gao@sydney.edu.au}).}
\and Bamdev Mishra\footnote{Microsoft India.
  (\texttt{bamdevm@microsoft.com}, \texttt{pratik.jawanpuria@microsoft.com}).}
 \and Pratik Jawanpuria\footnotemark[2] 
\and Junbin Gao\footnotemark[1]}
\date{}

\begin{document}

\maketitle

\begin{abstract}
    In this paper, we study the differentially private empirical risk minimization problem where the parameter is constrained to a Riemannian manifold. We introduce a framework of differentially private Riemannian optimization by adding noise to the Riemannian gradient on the tangent space. The noise follows a Gaussian distribution intrinsically defined with respect to the Riemannian metric. We adapt the Gaussian mechanism from the Euclidean space to the tangent space compatible to such generalized Gaussian distribution. We show that this strategy presents a simple analysis as compared to directly adding noise on the manifold. We further show privacy guarantees of the proposed differentially private Riemannian (stochastic) gradient descent using an extension of the moments accountant technique. Additionally, we prove utility guarantees under geodesic (strongly) convex, general nonconvex objectives as well as under the Riemannian Polyak–\L{}ojasiewicz condition. We show the efficacy of the proposed framework in several applications.
\end{abstract}

\section{Introduction}

With the ever-increasing complication of statistics and machine learning models, data privacy has become a primary concern as it becomes increasingly difficult to safeguard the potential disclosure of private information during model training. Differential privacy \cite{dwork2006calibrating,dwork2014algorithmic} provides a framework for quantifying the privacy loss as well as for designing algorithms with privacy-preserving guarantees. 

Many problems in machine learning fall under the paradigm of empirical risk minimization (ERM), where the loss is expressed as $\frac{1}{n}\sum_{i=1}^n f(w; z_i)$ with independent and identically distributed (i.i.d) samples $z_1,\ldots, z_n$ drawn from a data distribution $\gD$. Differentially private ERM, originally studied in \cite{chaudhuri2011differentially}, aims to safeguard the privacy disclosure of the samples in the solution $w^*$. There exist many approaches to achieve this goal. The first class of methods is to perturb the output of a non-differentially private algorithm by adding a Laplace or Gaussian noise \cite{chaudhuri2008privacy,chaudhuri2011differentially,zhang2017efficient}. Another approach considers adding a linear random perturbation term to the objective and is known as objective perturbation \cite{chaudhuri2008privacy,chaudhuri2011differentially,kifer2012private,iyengar2019towards,bassily2021differentially}. The third type of approach is to inject noise to gradient based algorithms at each iteration \cite{bassily2014private,wang2017differentially,wang2019differentially,abadi2016deep,bassily2019private}. In addition, there also exist various specialized methods for problems such as linear regression and statistics estimation \cite{dwork2009differential,nissim2007smooth,amin2019differentially,kamath2019privately,biswas2020coinpress}.

Among all the aforementioned approaches, gradient perturbation receives the most attention due to its generality for arbitrary loss functions and scalability to large datasets. Furthermore, it only requires to bound the sensitivity of the gradients computed at each iteration, rather than the entire process. There is an ongoing line of work that aims to improve the utility of the gradient perturbed algorithms while maintaining the same amount of privacy budget. Such improvements have been seen under (strongly) convex losses \cite{wang2017differentially,iyengar2019towards,yu2021gradient,bassily2019private,kuru2022differentially,asi2021private}, nonconvex losses \cite{wang2017differentially,zhang2017efficient,wang2019differentially,wang2019efficient}, and also structured losses such as satisfying Polyak-\L{}ojasiewicz condition \cite{wang2017differentially}.

In this paper, we consider following ERM problem in a differentially private setting where the parameter is constrained to lie on a Riemannian manifold, i.e.,
\begin{equation}
    \min_{w \in \M} \Big\{ F(w) = \frac{1}{n} \sum_{i=1}^n f_i(w) = \frac{1}{n} \sum_{i=1}^n f( w ; z_i ) \Big\}, \label{main_problem}
\end{equation}
where $\M$ is a $d$-dimensional Riemannian manifold and $f: \M \times \gD \xrightarrow{} \sR$ is a loss function over samples. Riemannian manifolds commonly occur in statistics and machine learning where the parameters naturally possess additional nonlinear structure, such as orthogonality \cite{absil2009optimization}, positive definiteness \cite{bhatia2009positive}, unit norm, and hyperbolic \cite{boumal2020introduction} among others. Popular applications involving above manifold structures include matrix completion~\cite{boumalrtrmc, han2021improved}, metric learning, covariance estimation~\cite{han2021riemannian}, principal component analysis~\cite{absil2007trust}, and taxonomy embedding \cite{nickel2018learning}, to name a few. 

% Examples of ERM problems on manifolds include principal component analysis where the parameters are on the Stiefel manifold (i.e., the set of column orthonormal matrices) \cite{zhang2016riemannian,han2021improved,boumal2020introduction}, metric learning and covariance estimation where the parameters are on the manifold of symmetric positive definite matrices \cite{davis2007information,guillaumin2009you,han2021riemannian}, as well as low rank matrix completion and subspace learning where the parameters lie on the Grassmann manifold (the set of subspaces) \cite{kasai2018riemannian,han2021improved}. Under non-private settings, Riemannian optimization \cite{absil2009optimization,boumal2020introduction} provides a class of methods to efficiently solve problem \eqref{main_problem} for arbitrary loss functions by treating the constrained problem as unconstrained problem over manifolds. This includes Riemannian steepest descent \cite{udriste2013convex}, conjugate gradient \cite{ring2012optimization}, trust region methods \cite{absil2007trust}, as well as many recent stochastic optimizers \cite{bonnabel2013stochastic,zhang2016riemannian,sato2019riemannian,kasai2018riemannian,hanmomentum2021,han2021improved}.

While few recent works address specific Riemannian optimization problems under differential privacy, such as private Fr\'echet mean computation \cite{reimherr2021differential}, there exists no systematic study of general purpose strategies to guarantee differential privacy for  \eqref{main_problem} on Riemannian manifolds. On the other hand, differentially private (non-Riemannian) approaches have been studied for ERM problems with specific constraints \cite{chaudhuri2013near,bassily2014private,jiang2016wishart,abadi2016deep,bassily2019private,maunu2022stochastic}. Such approaches typically employ the projected gradient algorithm, i.e., taking the gradient step and adding the noise in the Euclidean space, and then projecting onto the constraint set. Extending such a strategy to Riemannian manifolds may result in looser sensitivity and utility bounds scaling poorly with the dimension of the ambient space, which can be much larger than the intrinsic manifold dimension \cite{reimherr2021differential}.

%This paper introduces the first general framework for solving \eqref{main_problem} on manifolds while preserving differential privacy through gradient perturbed Riemannian optimization. 
%In addition, projected gradient methods ignore the intrinsic geometry when taking the update and thus hamper the utility under structured loss functions on manifolds discussed in Section \ref{diff_private_riemannian_opt_sect} such as geodesic (strongly) convex functions. 
%We also provide illustrating examples in Section \ref{application_sect}.

\paragraph{Contributions.} 
In this work, we propose a general framework via Riemannian optimization to achieve differential privacy for \eqref{main_problem} by adding noise to the Riemannian gradient adhering to the Riemannian metric (an inner product formally defined in Section \ref{prelim_sect}). To this end, we generalize the Gaussian mechanism to the tangent space of Riemannian manifolds and also adapt the moments accountant technique to trace the privacy loss. We study the privacy guarantees of the differentially private Riemannian (stochastic) gradient descent method. Additionally, we show its utility guarantees for a variety of interesting function classes on Riemannian manifolds, including geodesic (strongly) convex, general nonconvex functions, and functions satisfying Riemannian Polyak-\L{}ojasiewicz (PL) conditions. 
A summary of utility bounds with our proposed framework is in Table \ref{table_main_result}. 
In addition, we show that the projected gradient methods for \eqref{main_problem} ignore the intrinsic geometry when taking the update and thus hamper the utility under structured loss functions on manifolds (discussed in Section~\ref{diff_private_riemannian_opt_sect}). 
Finally, we provide illustrating examples and empirical results in Section \ref{application_sect}. 
%Finally, we demonstrate the use of the proposed framework in applications. 

\begin{table}[t]
\begin{center}
\renewcommand{\arraystretch}{1.4}
{\small 
\caption{Utility guarantees of proposed $(\epsilon, \delta)$-differentially private Riemannian (stochastic) gradient descent under different function classes. $L_0, L_1$ are geodesic Lipschitz and smoothness constants of $f$ and $\beta, \tau$ are constants of geodesic strong convexity and Riemannian PL condition of $F$ respectively. $n$ is the size of the dataset. $d$ is the intrinsic dimension of the manifold. $\varsigma$ is the curvature constant of the domain (defined in Lemma \ref{curvature_lemma}). All utility bounds are measured in the expected empirical excess risk $\sE[F(w^{\rm priv})] - F(w^*)$ where $w^*$ is a global minimizer except for the general nonconvex case where the bound is measured in $\sE \| F(w^{\rm priv}) \|^2_{w^{\rm priv}}$. The bounds hide dependence on $c_l$ (lower bound on the metric tensor) and $D_\gW$ (diameter bound of the domain). Please also refer to Section \ref{convergence_sect}.} 
\label{table_main_result}
\begin{tabular}{@{}P{0.20\textwidth}P{0.24\textwidth}P{0.25\textwidth}P{0.22\textwidth}@{}}
\toprule
Geodesic convex & Geodesic strongly convex  & Riemannian PL condition & General nonconvex \\
\midrule
$O\Big( \frac{\sqrt{d \log(1/\delta) \varsigma} L_0 }{n\epsilon} \Big)$ & $O \Big( \frac{\beta^{-1} d \log(1/\delta) \varsigma  L_0^2}{ n^2 \epsilon^2} \Big)$  & $O\Big( \frac{\tau^{-1} d \log(n) \log(1/\delta)L_0^2}{n^2\epsilon^2} \Big)$  & $O \Big( \frac{\sqrt{d L_1 \log(1/\delta) } L_0}{n\epsilon}   \Big)$ \\
\bottomrule
\end{tabular}
}
\end{center}
\end{table}

% \paragraph{Organizations.}
% For the rest of the papers, we first introduce preliminary knowledge on Riemannian geometry, Riemannian optimization as well as differential privacy in Section \ref{prelim_sect}. Section \ref{diff_privacy_manifold_sect} then proposes generalizations of Gaussian mechanism and moments accountant technique to Riemannian manifolds. In Section \ref{diff_private_riemannian_opt_sect}, we introduce a framework of differentially private Riemannian optimization and analyze the privacy and utility guarantees of differentially private Riemannian (stochastic) gradient descent under geodesic convex, geodesic strongly convex, general nonconvex functions as well as functions satisfying Riemannian Polyak-\L{}ojasiewicz condition. Section \ref{application_sect} discusses several applications with the proposed framework. 

\section{Preliminaries and related work}
\label{prelim_sect}

\paragraph{Riemannian geometry.} A Riemannian manifold $\M$ of dimension $d$ is a smooth manifold with an inner product structure $\langle \cdot , \cdot \rangle_w$ (i.e., a Riemannian metric) on every tangent space $T_w\M$. Given an orthonormal basis $(\partial_1, \ldots, \partial_d)$ for $T_w\M$, the metric can be expressed as a (symmetric positive definite) matrix $G_w$ and the inner product can be written as $\langle \xi , \zeta \rangle_w = \vec{\xi}^\top G_w \vec{\zeta}$ where $\vec{\xi}, \vec{\zeta} \in \sR^d$ are the vectorization of tangent vectors $\xi, \zeta \in T_w\M$ in the normal coordinate system.  An induced norm is defined as $\| \xi \|_w = \sqrt{\langle \xi, \xi\rangle_w}$ for any $\xi \in T_w\M$. A geodesic $\gamma: [0,1] \xrightarrow{} \M$ is a locally distance minimizing curve on the manifold with zero acceleration. For any $\xi \in T_w\M$, the exponential map is defined as ${\rm Exp}_w(\xi) = \gamma(1)$ where $\gamma(0) = w$ and $\gamma'(0) = \xi$. If, between 
two points $w, w'\in \M$, there exists a unique geodesic connecting them, the exponential map has a smooth inverse and the Riemannian distance is given by ${\rm dist}(w, w') = \| {\rm Exp}_w^{-1}(w') \|_w = \| {\rm Exp}_{w'}^{-1}(w) \|_{w'}$. We call a neighbourhood $\gW$ totally normal if for any two points, the exponential map is invertible. The Riemannian gradient of a real-valued function, denoted as $\grad F(w)$, is a tangent vector that satisfies, for any $\xi \in T_w\M$, it holds that $\langle \grad F(w), \xi \rangle_w = \D_\xi F(w) = \langle \nabla F(w), \xi \rangle_2$ where $\D_\xi F(w)$ is the directional derivative of $F(w)$ along $\xi$ and $\nabla F(w)$ is the Euclidean gradient.

\paragraph{Riemannian optimization.}
Under non-private settings, Riemannian optimization \cite{absil2009optimization,boumal2020introduction} provides a class of methods to efficiently solve problem \eqref{main_problem} for arbitrary loss functions by treating the constrained problem as unconstrained problem over manifolds. Given the Riemannian gradient, Riemannian steepest descent \cite{udriste2013convex} takes a gradient update via the Exponential map so that the iterates stay on the manifold, i.e., ${\rm Exp}_{w}(- \eta \, \grad F(w))$ for some stepsize $\eta$. Other more advanced solvers include Riemannian conjugate gradient \cite{ring2012optimization}, trust region methods \cite{absil2007trust}, as well as many recent stochastic optimizers \cite{bonnabel2013stochastic,zhang2016riemannian,sato2019riemannian,kasai2018riemannian,hanmomentum2021,han2021improved}.

\paragraph{Function classes on Riemannian manifolds.} The notion of Lipschitz continuity has been generalized to Riemannian manifolds \cite{boumal2020introduction,zhang2016first}. A differentiable function $F:\M \xrightarrow{} \sR$ is \textit{geodesic} $L_0$-\textit{Lipschitz} if for any $w \in \M$, $\| \grad F(w)\|_w \leq L_0$. The function $F$ is called \textit{geodesic} $L_1$-\textit{smooth} if for any $w \in \M$ and $w' = {\rm Exp}_w(\xi)$, we have $|F(w') - F(w) - \langle \grad F(w) , \xi \rangle_w | \leq  \frac{L_1}{2}\| \xi \|_w^2$. 

Geodesic convexity \cite{zhang2016first} is an extension of convexity in the Euclidean space. A set $\gW \subseteq \M$ is \textit{geodesic convex} if for any two points in the set, there exists a geodesic in the set joining them. A function $F: \gW \xrightarrow{} \sR$ is called \textit{geodesic convex} if for any $w, w' \in \gW$, it satisfies $F(\gamma(t)) \leq (1-t) F(w) + t F(w')$ for all $t \in [0,1]$, where $\gamma$ is the geodesic such that $\gamma(0) = w, \gamma(1) = w'$. If the function $F$ is differentiable, an equivalent characterization of geodesic convexity is $F(w') \geq F(w) + \langle \grad F(w), \xi \rangle_w$ for any $w \in \gW$, $w' = {\rm Exp}_w(\xi)$. In addition, a function $F$ is called \textit{geodesic $\beta$-strongly convex} if for any $w, w'  = {\rm Exp}_w(\xi) \in \gW$, it satisfies $F(w') \geq F(w) + \langle \grad F(w), \xi \rangle_w + \frac{\beta}{2}d^2(w, w')$ for some $\beta > 0$.  

Further, we introduce Riemannian Polyak–\L{}ojasiewicz (PL) condition \cite{zhang2016riemannian,kasai2018riemannian,han2021improved}, which is weaker than the geodesic strong convexity. A function $F:\M \xrightarrow{} \sR$ is said to satisfy the \textit{Riemannian PL condition} if for any $w \in \M$, there exists $\tau > 0$ such that $F(w) - F(w^*) \leq \tau \| \grad F(w) \|^2_w$ where $w^*$ is a global minimizer of $F$ on $\M$.

Finally, we recall a trigonometric distance bound for Riemannian manifolds with lower bounded sectional curvature, which is crucial in convergence analysis for geodesic convex optimization.

\begin{lemma}[Trigonometric distance bound \cite{bonnabel2013stochastic,zhang2016first,zhang2016riemannian,han2021riemannian}]
\label{curvature_lemma}
Let $w_0, w_1, w_2 \in \gW \subseteq \M$ lie in a totally normal neighbourhood of a Riemannian manifold with curvature lower bounded by $\kappa_{\rm min}$, and $l_0 = {\rm dist}(w_0, w_1), l_1 = {\rm dist}(w_1, w_2)$ and $l_2 = {\rm dist}(w_0, w_2)$. Denote $\theta$ as the angle on $T_{w_0}\M$ such that $\cos(\theta) =  \frac{1}{l_0 l_2}\langle {\rm Exp}_{w_0}^{-1}(w_1), {\rm Exp}_{w_0}^{-1}(w_2) \rangle_{w_0}$. Let $D_\gW$ be the diameter of $\gW$, i.e., $D_\gW := \max_{w, w' \in \gW} {\rm dist}(w, w')$. Define the curvature constant $\varsigma = \frac{\sqrt{|\kappa_{\rm min}|}D_\gW}{\tanh(|\sqrt{|\kappa_{\rm min}|} D_\gW)}$ if $\kappa_{\rm min} < 0$ and $\varsigma = 1$ if $\kappa_{\rm min} \geq 0$. Then, we have $l_1^2 \leq  \varsigma l_0^2 + l_2^2 - 2 l_0 l_2 \cos(\theta)$. 
\end{lemma}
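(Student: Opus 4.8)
The plan is to reduce the inequality to the two–dimensional model space of constant curvature $\kappa_{\rm min}$ via a comparison theorem, and then to verify it there by an explicit law–of–cosines computation. First I would use the hypothesis that $w_0,w_1,w_2$ lie in a totally normal neighbourhood: this guarantees that each pair is joined by a unique minimizing geodesic, so the three points span a genuine geodesic triangle with side lengths $l_0,l_1,l_2$, and $\theta$ is a well-defined angle at $w_0$ (the angle in $T_{w_0}\M$ between ${\rm Exp}_{w_0}^{-1}(w_1)$ and ${\rm Exp}_{w_0}^{-1}(w_2)$). Since the sectional curvature is at least $\kappa_{\rm min}$ throughout $\gW$, I would invoke Toponogov's hinge comparison theorem: comparing the hinge at $w_0$ with the hinge having the same adjacent lengths $l_0,l_2$ and the same included angle $\theta$ drawn in the simply connected space form $M_{\kappa_{\rm min}}$ of constant curvature $\kappa_{\rm min}$, the opposite side satisfies $l_1\le\tilde l_1$, where $\tilde l_1$ is the opposite-side length in $M_{\kappa_{\rm min}}$. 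It then suffices to prove $\tilde l_1^{\,2}\le\varsigma\,l_0^2+l_2^2-2l_0l_2\cos\theta$ in the model space.

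In the case $\kappa_{\rm min}\ge 0$ (where $\varsigma=1$), I would observe that the curvature is then $\ge 0$, apply Toponogov's theorem directly with the Euclidean plane as comparison space — equivalently, use that the opposite side of a hinge with fixed adjacent lengths and included angle is nonincreasing in the ambient curvature — and conclude $l_1^2\le l_0^2+l_2^2-2l_0l_2\cos\theta$, which is exactly the claim.

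In the case $\kappa_{\rm min}<0$, set $K:=|\kappa_{\rm min}|$, so $M_{\kappa_{\rm min}}$ is the hyperbolic plane of curvature $-K$, where the law of cosines reads
\[
\cosh(\sqrt K\,\tilde l_1)=\cosh(\sqrt K\,l_0)\cosh(\sqrt K\,l_2)-\sinh(\sqrt K\,l_0)\sinh(\sqrt K\,l_2)\cos\theta .
\]
Writing $s=\sqrt K\,l_0$ and $u=\sqrt K\,l_2$ and using that $\cosh$ is increasing on $[0,\infty)$, the desired bound reduces, after rescaling, to the one-variable inequality
\[
\big(\cosh^{-1}(\cosh s\cosh u-\sinh s\sinh u\cos\theta)\big)^2\le\tfrac{u}{\tanh u}\,s^2+u^2-2su\cos\theta .
\]
I would prove this by treating $r:=\cos\theta\in[-1,1]$ as the free variable: at $r=\pm 1$ both sides collapse via $\cosh s\cosh u\mp\sinh s\sinh u=\cosh(s\mp u)$ and the inequality reduces to the elementary fact $\tanh u\le u$, and for intermediate $r$ one compares the two sides as functions of $r$ (and, if needed, of $s$), the role of the factor $u/\tanh u\ge 1$ being precisely to absorb the growth of the product $\cosh s\cosh u$. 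This gives $\tilde l_1^{\,2}\le\tfrac{\sqrt K\,l_2}{\tanh(\sqrt K\,l_2)}\,l_0^2+l_2^2-2l_0l_2\cos\theta$; then, since $l_2\le D_\gW$ and $t\mapsto\sqrt K\,t/\tanh(\sqrt K\,t)$ is increasing on $(0,\infty)$, I would enlarge the coefficient to $\varsigma=\sqrt K\,D_\gW/\tanh(\sqrt K\,D_\gW)$, obtaining the stated inequality.

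The hard part will be the displayed one-variable inequality in the negatively curved case: everything else is either Toponogov's theorem or routine monotonicity, whereas this estimate is tight (as $s\to 0$ both sides tend to $u^2$), so no slack is available, and converting the transcendental hyperbolic identity into the clean quadratic bound with the factor $\sqrt K\,l_2/\tanh(\sqrt K\,l_2)$ needs a careful calculus argument. This is exactly the step already carried out in the cited works \cite{bonnabel2013stochastic,zhang2016first,zhang2016riemannian,han2021riemannian}, so one may alternatively just quote the result rather than redo the calculation.
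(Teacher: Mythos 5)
The paper does not actually prove Lemma~\ref{curvature_lemma}; it imports it from the cited references, so there is no in-paper proof to compare against. Your outline correctly reproduces the standard argument from those sources (in particular \cite{zhang2016first}): Toponogov's hinge comparison with the model space of curvature $\kappa_{\rm min}$ (with the correct direction $l_1\le\tilde l_1$ for a lower curvature bound), the hyperbolic law of cosines, the reduction to the one-variable bound with coefficient $\sqrt{K}\,l_2/\tanh(\sqrt{K}\,l_2)$, and the final enlargement to $\varsigma$ using $l_2\le D_\gW$ and monotonicity of $t\mapsto t/\tanh t$. The only piece you leave unverified is the transcendental inequality $\bigl(\cosh^{-1}(\cosh s\cosh u-\sinh s\sinh u\cos\theta)\bigr)^2\le\frac{u}{\tanh u}s^2+u^2-2su\cos\theta$ for intermediate values of $\cos\theta$; your endpoint checks are right but do not by themselves establish it, and this is exactly the content of Lemma~5 in \cite{zhang2016first}, so either carry out that computation or cite it explicitly as you suggest.
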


\label{dp_review_sect}

\paragraph{Differential privacy.} Let $D = \{z_1, \ldots, z_n\} \subset \gD^n$ be a dataset. A neighbouring dataset of $D$, denoted as $D'$ is a dataset that differs in only one sample from $D$. The neighbouring relation is denoted as $D \sim D'$. We first recall the definition of $(\epsilon, \delta)$-differential privacy (DP) \cite{dwork2014algorithmic}, which is defined on arbitrary measurable space $\M$ (not necessarily a Riemannian manifold). 

\begin{definition}[$(\epsilon, \delta)$-Differential privacy]
\label{differential_privacy_def}
A randomized mechanism $\gR: \gD^n \xrightarrow{} \M$ is called $(\epsilon, \delta)$-differentially private on $\M$ if for any neighbouring datasets $D, D' \subset \gD^n$ and any measurable space $\gA \subseteq \M$, we have $\sP (\gR(D) \in \gA) \leq \exp(\epsilon) \, \sP(\gR(D') \in \gA) + \delta$.
\end{definition}

In addition, we make use of R\'enyi differential privacy (RDP) \cite{mironov2017renyi}, which enjoys a tighter privacy bound under composition and subsampling \cite{wang2019subsampled}. Given neighbouring datasets $D, D'$, we first define the cumulant generating function of a mechanism $\gR$ as
\begin{equation*}
    K_{\gR, (D, D')}(\lambda) := \log \sE_{o \sim \gR(D)} [\exp(\lambda \, \gL_{\gR(D) \| \gR(D')}) ] = \log \sE_{o \sim \gR(D)} \Big[ \Big( \frac{p(\gR(D) = o)}{p(\gR(D') = o)} \Big)^{\lambda}  \Big], 
\end{equation*}
where $\gL_{\gR(D) \| \gR(D')} = \log \big(\frac{p(\gR(D) = o)}{p(\gR(D')  = o)} \big)$ is known as the privacy loss random variable at $o \sim \gR(D)$ \cite{dwork2010boosting,dwork2014algorithmic}. When maximized over all the neighbouring datasets, $K_{\gR}(\lambda) := \sup_{D \sim D'} K_{\gR,(D, D')}(\lambda)$ is called the $\lambda$-th moment of the mechanism \cite{abadi2016deep}.

\begin{definition}[$(\alpha,\rho)$-R\'enyi differential privacy \cite{mironov2017renyi}]
For $\alpha \geq 1$ and $\rho > 0$, a randomized mechanism $\gR: \gD^n \xrightarrow{} \M$ is called $(\alpha, \rho)$-R\'enyi differentially private if $ \frac{1}{\alpha - 1} K_{\gR}(\alpha-1) \leq \rho$. 
\end{definition}

% R\'enyi differential privacy is related to $(\epsilon, \delta)$-differential privacy as follows. 

\begin{proposition}[Relationship between RDP and $(\epsilon, \delta)$-DP \cite{mironov2017renyi}]
\label{prop_rdp_dp}
If a mechanism $\gR$ satisfies $(\alpha, \rho)$-R\'enyi differential privacy, then it satisfies $(\rho + \log(1/\delta)/(\alpha - 1), \delta)$-differential privacy.
\end{proposition}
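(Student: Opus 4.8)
The plan is to run the standard Chernoff-tail argument on the privacy loss random variable: first turn the moment bound coming from $(\alpha,\rho)$-RDP into an exponential tail bound for the privacy loss, then split the output space into a region where the loss is controlled (giving the $e^\epsilon$ multiplicative factor) and a region of probability at most $\delta$ (the additive slack).

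Fix neighbouring datasets $D \sim D'$ and write $Z := \gL_{\gR(D) \| \gR(D')}$, regarded as a function of $o \sim \gR(D)$. I would first record the tail estimate: for any threshold $t$, Markov's inequality applied to $\exp((\alpha-1)Z)$ gives
\[
\sP_{o \sim \gR(D)}(Z > t) \le e^{-(\alpha-1)t}\,\sE_{o \sim \gR(D)}\big[e^{(\alpha-1)Z}\big] = e^{-(\alpha-1)t}\,e^{K_{\gR,(D,D')}(\alpha-1)} \le e^{-(\alpha-1)(t-\rho)},
\]
where the middle equality is just the definition of $K_{\gR,(D,D')}$, and the last inequality uses $K_{\gR,(D,D')}(\alpha-1) \le K_{\gR}(\alpha-1) \le (\alpha-1)\rho$, i.e., the $(\alpha,\rho)$-RDP hypothesis (here we use $\alpha > 1$ so that $\alpha - 1 > 0$). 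Choosing $t = \epsilon := \rho + \log(1/\delta)/(\alpha-1)$ makes the right-hand side exactly $\delta$.

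Next, for an arbitrary measurable $\gA \subseteq \M$ I would decompose
\[
\sP(\gR(D) \in \gA) = \sP\big(\gR(D) \in \gA,\ Z \le \epsilon\big) + \sP\big(\gR(D) \in \gA,\ Z > \epsilon\big),
\]
bound the second term by $\sP(Z > \epsilon) \le \delta$ via the tail estimate, and bound the first term by noting that on $\{Z \le \epsilon\}$ we have $p(\gR(D)=o) \le e^{\epsilon} p(\gR(D')=o)$ pointwise, so integrating this pointwise inequality over $\gA \cap \{Z \le \epsilon\}$ yields $\sP(\gR(D) \in \gA,\ Z \le \epsilon) \le e^{\epsilon}\,\sP(\gR(D') \in \gA)$. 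Adding the two bounds gives $\sP(\gR(D) \in \gA) \le e^{\epsilon}\,\sP(\gR(D') \in \gA) + \delta$, and since $D \sim D'$ were arbitrary this is precisely $(\epsilon,\delta)$-differential privacy by Definition \ref{differential_privacy_def}.

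The proof is essentially routine; the only place requiring mild care is the measure-theoretic bookkeeping for the densities. One should work with respect to a common dominating measure (for instance the sum of the two output laws), so that $p(\gR(D)=o)$ and $p(\gR(D')=o)$ are legitimate Radon–Nikodym derivatives and the pointwise inequality above makes sense; the set where $p(\gR(D')=o)=0$ contributes $0$ to $\sP(\gR(D') \in \gA)$ and, since $Z = +\infty$ there, is swept into the event $\{Z > \epsilon\}$ and hence into the $\delta$ term. This null-set/$+\infty$ handling is the main (and only) subtlety.
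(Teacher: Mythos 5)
Your proof is correct. Note that the paper itself gives no proof of this proposition---it is imported verbatim from Mironov's R\'enyi DP paper---so there is nothing internal to compare against; what you have written is a legitimate self-contained derivation. Your route is the Chernoff-tail argument on the privacy loss random variable (Markov applied to $\exp((\alpha-1)Z)$, then the split of $\gA$ into $\{Z \le \epsilon\}$ and $\{Z>\epsilon\}$), which is exactly the ``tail bound'' half of the moments accountant of Abadi et al.\ that the paper invokes elsewhere (their Theorem 2.1/2.2); Mironov's own proof of the cited proposition instead applies H\"older's inequality to get $\sP(\gR(D)\in\gA) \le e^{\rho(\alpha-1)/\alpha}\,\sP(\gR(D')\in\gA)^{1-1/\alpha}$ and then does a case split on the size of $\sP(\gR(D')\in\gA)$. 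Both arguments yield the identical conversion $\epsilon = \rho + \log(1/\delta)/(\alpha-1)$; yours is arguably the more natural one in the context of this paper, since it reuses the cumulant $K_{\gR}(\lambda)$ already defined in Section \ref{dp_review_sect} and makes transparent why the moments bounds of Lemmas \ref{moments_bound_full}--\ref{moments_bound_subsample} compose into Theorem \ref{privacy_guarantee_theorem}. Your handling of the implicit $\alpha>1$ requirement and of the null set where the reference density vanishes (sweeping $Z=+\infty$ into the $\delta$ term) is also correct.
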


The notions of differential privacy introduced above are well-defined on Riemannian manifolds, which is a measurable space under the Borel sigma algebra \cite{pennec2006intrinsic}. However, a systematic approach for preserving differential privacy when the parameters of interest are on Riemannian manifolds has not been studied. A recent work~\cite{reimherr2021differential} proposes differentially private Fr\'echet mean computation over general Riemannian manifolds by output perturbation. Nevertheless, computing the Fr\'echet mean is a special problem instance of \eqref{main_problem}, which we can solve via our proposed general framework under differential privacy. See more detailed discussions and comparisons in Section \ref{application_sect}. 
% and therefore, the strategy proposed in \cite{reimherr2021differential} cannot be trivially generalized to solve \eqref{main_problem}. 

%discussed in details in Section \ref{application_sect}, 

% Given that the Borel sigma algebra allows one to define a measure on Riemannian manifolds \cite{pennec2006intrinsic}, the notions of differential privacy introduced above are well-defined on Riemannian manifolds. \alertPJ{this line should be improved - i am also unclear how.}

%\paragraph{Differential privacy on Riemannian manifolds.}
% As mentioned in the introduction, there lacks a systematic approach for preserving differential privacy when the parameters of interest are on Riemannian manifolds. 
%For the example of principal component analysis (PCA), \cite{chaudhuri2013near} considers perturb the output of the principal components and sample from a Bingham distribution on the Stiefel manifold. In \cite{jiang2016wishart}, perturbation is performed on the input covariance matrix via the Wishart distribution. 
% Only recently, a paper \cite{reimherr2021differential} proposes differentially private Fr\'echet mean computation over general Riemannian manifolds by output perturbation. Nevertheless, computing the Fr\'echet mean is a special instance of \eqref{main_problem}, discussed in details in Section \ref{application_sect}, which we solve via our proposed general framework. 

\section{Differential privacy on Riemannian manifolds}
\label{diff_privacy_manifold_sect}

This section proposes many tools for preserving and analyzing differential privacy on Riemannian manifolds. Proofs for the results in this section are deferred to the supplementary. 
%Appendix \ref{proof_for_manifolddp_appendix}.

First, we generalize the Gaussian mechanism \cite{dwork2014algorithmic} from the Euclidean space to Riemannian manifolds. One approach is to directly add noise on the manifold following an intrinsic Gaussian distribution \cite{pennec2006intrinsic}. However, this strategy faces two challenges. First, it is required to bound the sensitivity in terms of the Riemannian distance, which could be difficult particularly for negatively curved manifolds, such as the symmetric positive definite manifold. Second, the generalization suffers from metric distortion by curvature and it requires a nontrivial adaptation of the proof strategy in the Euclidean space \cite{dwork2014algorithmic}. Nevertheless, it is worth mentioning that the Laplace mechanism \cite{dwork2006calibrating} can be generalized with the triangle inequality of the Riemannian distance as has been done recently in \cite{reimherr2021differential}.

Instead, we consider directly adding noise to the tangent space of the manifold following an isotropic Gaussian distribution with respect to the Riemannian metric. In this case, we can measure the sensitivity on the tangent space. We highlight that although the tangent space can be identified as a Euclidean space, the proposed strategy differs from the classic Gaussian mechanism, which adds isotropic noise to each coordinate in the Euclidean space. 

To this end, we define the tangent space Gaussian distribution as follows.

\begin{definition}[Tangent space Gaussian distribution]
\label{gaussian_tangent_def}
For any $w \in \M$, a tangent vector $\xi \in T_w\M$ follows a tangent space Gaussian distribution at $w$, denoted as $\xi \sim \gN_{w}(\mu, \sigma^2)$ with mean $\mu \in T_w\M$ and standard deviation $\sigma > 0$ if its density is given by $p_w(\xi) = C^{-1}_{w, \sigma} \exp(- \frac{\| \xi - \mu \|^2_w}{2\sigma^2})$, where $C_{w, \sigma}$ is the normalizing constant.
\end{definition}

\begin{remark}
\label{rmk_Gaussian}
In a normal coordinate system of the tangent space, we remark that $\gN_w(\mu, \sigma^2)$ is equivalent to a standard multivariate Gaussian with covariance as a function of the metric tensor of the tangent space. Denote $\vec{\xi} \in \sR^{d}$ as the vectorization of the tangent vector $\xi \in T_w\M \simeq \sR^d$ in the normal coordinates. The density can then be written as $p_w(\xi) = C^{-1}_{w,\sigma} \exp(- \frac{1}{{2\sigma^2}} (\vec{\xi} - \vec{\mu})^\top G_w ( \vec{\xi} - \vec{\mu}))$, where $G_w$ is the (symmetric positive definite) metric tensor at $w$. This is a standard Gaussian distribution with mean $\vec{\mu}$ and covariance $\sigma^2 G_w^{-1}$, i.e., $\vec{\xi} \sim \gN(\vec{\mu}, \sigma^2 G_w^{-1})$. The normalizing constant is given by $C_{w, \sigma} = \sqrt{(2\pi)^d \sigma^2 |G_w^{-1}|}$. 
\end{remark}

Next, we introduce a generalization of the Gaussian mechanism on the tangent space. We stress that the following Gaussian mechanism depends on the sensitivity measured in the Riemannian metric.

\begin{proposition}[Tangent space Gaussian mechanism]
\label{gaussian_mech_manifold}
Given a query function $H: \gD^n \xrightarrow{} T_w\M$ on tangent space at $w \in \M$, let $\Delta_H := \sup_{D, D' \subset \gD^n: D \sim D'} \| H(D) - H(D') \|_w$ be the global sensitivity of $H$ with respect to the Riemannian metric. Define $\gR(D) = H(D) + \xi$ where $\xi \sim \gN_w(0, \sigma^2)$ with $\sigma^2 \geq {2\log(1.25/\delta)} \Delta_H^2/\epsilon^2$. Then, $\gR$ is $(\epsilon, \delta)$-differentially private. 
\end{proposition}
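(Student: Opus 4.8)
The plan is to reduce the tangent-space Gaussian mechanism to the classical Euclidean Gaussian mechanism by passing to normal coordinates, where the sensitivity in the Riemannian metric becomes an ordinary Euclidean sensitivity after an appropriate linear change of variables. Concretely, fix the point $w \in \M$ and work in a normal coordinate system for $T_w\M$, so that a tangent vector $\xi$ is identified with $\vec{\xi} \in \sR^d$ and the metric is the matrix $G_w$. By Remark \ref{rmk_Gaussian}, the noise $\xi \sim \gN_w(0, \sigma^2)$ corresponds to $\vec{\xi} \sim \gN(0, \sigma^2 G_w^{-1})$. Since $G_w$ is symmetric positive definite, it has a symmetric positive definite square root $G_w^{1/2}$; the key observation is that $G_w^{1/2} \vec{\xi} \sim \gN(0, \sigma^2 \bI_d)$, i.e., after applying $G_w^{1/2}$ the noise becomes isotropic in the usual Euclidean sense.

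The main steps I would carry out are as follows. First, transform the query: define $\widetilde{H}(D) := G_w^{1/2}\, \vec{H(D)}$ and $\widetilde{\gR}(D) := G_w^{1/2}\, \vec{\gR(D)} = \widetilde{H}(D) + G_w^{1/2}\vec{\xi}$. Second, observe that the Euclidean ($\ell_2$) sensitivity of $\widetilde{H}$ equals exactly $\Delta_H$: for neighbouring $D \sim D'$,
\begin{equation*}
\| \widetilde{H}(D) - \widetilde{H}(D') \|_2^2 = \big(\vec{H(D)} - \vec{H(D')}\big)^\top G_w \big(\vec{H(D)} - \vec{H(D')}\big) = \| H(D) - H(D') \|_w^2 \le \Delta_H^2.
\end{equation*}
Third, apply the classical Euclidean Gaussian mechanism (Theorem A.1 / Appendix A of \cite{dwork2014algorithmic}): since $\widetilde{\gR}(D) = \widetilde{H}(D) + \zeta$ with $\zeta \sim \gN(0, \sigma^2 \bI_d)$ and $\sigma^2 \ge 2\log(1.25/\delta)\,\Delta_H^2/\epsilon^2$, the mechanism $\widetilde{\gR}$ is $(\epsilon,\delta)$-differentially private. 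Fourth, conclude that $\gR$ itself is $(\epsilon,\delta)$-DP by post-processing invariance: $\gR(D) = G_w^{-1/2}\,\widetilde{\gR}(D)$ (interpreting the output back as a tangent vector via the fixed coordinate chart), and $G_w^{-1/2}$ is a deterministic bijection, so differential privacy is preserved under Definition \ref{differential_privacy_def}. One should note that the change of variables is a fixed linear map (it does not depend on the data $D$, only on the point $w$, which is fixed), which is exactly what makes the post-processing argument valid.

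The step I expect to require the most care is the bookkeeping around the coordinate identification: one must check that the density transformation in Remark \ref{rmk_Gaussian} is applied consistently — in particular that the normalizing constant $C_{w,\sigma}$ and the Jacobian factors from the linear maps $G_w^{\pm 1/2}$ cancel correctly so that $G_w^{1/2}\vec{\xi}$ genuinely has the standard $\gN(0,\sigma^2\bI_d)$ density — and that measurable sets $\gA \subseteq T_w\M$ in the DP definition correspond under $G_w^{1/2}$ to measurable sets in $\sR^d$ (immediate, since the map is a linear isomorphism). A minor alternative, if one prefers to avoid invoking the Euclidean Gaussian mechanism as a black box, is to compute the privacy loss random variable $\gL_{\gR(D)\|\gR(D')}$ directly from the densities $p_w$: the log-ratio is a quadratic-minus-linear expression in $\vec{\xi}$ that, after completing the square with respect to the $G_w$-inner product, is controlled exactly by $\| H(D) - H(D')\|_w \le \Delta_H$, reproducing the standard tail bound; but the post-processing reduction above is cleaner and I would present that.
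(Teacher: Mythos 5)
Your proposal is correct, but it takes a genuinely different route from the paper. The paper's proof works directly with the densities: it writes the privacy loss random variable $\gL_{\gR(D)\|\gR(D')} = \frac{1}{2\sigma^2}(2\langle \zeta,\xi\rangle_w + \|\zeta\|_w^2)$ with $\zeta = H(D)-H(D')$, observes (via the vectorization of Remark \ref{rmk_Gaussian}) that this is a one-dimensional Gaussian with mean $\|\zeta\|_w^2/(2\sigma^2)$ and variance $\|\zeta\|_w^2/\sigma^2$, and then invokes the Gaussian tail-bound argument of \cite[Theorem A.1]{dwork2014algorithmic} together with \cite[Lemma 3.17]{dwork2014algorithmic} to conclude — i.e., it replays the Euclidean proof with the Euclidean inner product replaced by $\langle\cdot,\cdot\rangle_w$. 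Your primary argument instead whitens the noise with the data-independent linear isomorphism $G_w^{1/2}$, checks that the $\ell_2$-sensitivity of the transformed query equals $\Delta_H$ exactly, applies the classical Gaussian mechanism as a black box, and pulls the guarantee back by post-processing. Both are valid; your reduction is more modular and makes transparent why the Riemannian sensitivity is the right quantity (it is the Euclidean sensitivity in the whitened coordinates), while the paper's direct computation has the side benefit of exhibiting the privacy loss random variable explicitly, which is the same object reused in the moments bounds of Lemmas \ref{moments_bound_full} and \ref{moments_bound_subsample}. The ``minor alternative'' you sketch at the end is essentially the paper's actual proof. One small point worth keeping in your write-up: the validity of the whole reduction hinges on $G_w^{1/2}$ depending only on the fixed base point $w$ and not on the dataset, which you correctly flag.
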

% \begin{proof}
% The idea is to bound the privacy loss random variable, defined with respect to the Riemannian metric. This can be achieved by vectorization on the tangent space as in Remark \ref{rmk_Gaussian}.
% \end{proof}

To show privacy guarantees in the subsequent sections, we adapt the moments accountant technique in the Euclidean space \cite{abadi2016deep} to Riemannian manifolds, which results in a tighter bound compared to the advanced composition \cite{dwork2014algorithmic}. To achieve this, we first provide lemmas that bound the moments of a tangent space Gaussian mechanism (Proposition \ref{gaussian_mech_manifold}). The proof strategy is motivated by the connection of the $(\epsilon, \delta)$-differential privacy (Definition \ref{differential_privacy_def}) and R\'enyi differential privacy \cite{mironov2017renyi} established in \cite{wang2019subsampled}.

The next two lemmas show upper bounds on $K_\gR(\lambda)$ under full datasets (Lemma \ref{moments_bound_full}) as well as under subsampling (Lemma \ref{moments_bound_subsample}).

\begin{lemma}[Moments bound]
\label{moments_bound_full}
Consider a query function $H : \gD^n \xrightarrow{} T_w\M$ for some $w \in \M$. Given a dataset $D = \{z_1, \ldots, z_n\} \subset \gD^n$ and suppose $H(D) = \frac{1}{n} \sum_{i=1}^n h(z_i)$ and $h$ is geodesic $L_0$-Lipschitz. Let $\gR(D) = H(D) + \xi$, where $\xi \sim \gN_w(0, \sigma^2)$. Then, the $\lambda$-th moment of $\gR$ satisfies $K_\gR(\lambda) \leq \frac{2 \lambda (\lambda + 1) L_0^2}{n^2 \sigma^2}$.
\end{lemma}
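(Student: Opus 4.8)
The plan is to reduce the computation of the $\lambda$-th moment of the Riemannian tangent-space Gaussian mechanism to the well-understood Euclidean Gaussian moment bound, by exploiting the equivalence in Remark~\ref{rmk_Gaussian} between $\gN_w(\mu,\sigma^2)$ and a multivariate Gaussian $\gN(\vec\mu,\sigma^2 G_w^{-1})$ in normal coordinates. First I would fix a worst-case pair of neighbouring datasets $D\sim D'$, say differing in the $n$-th sample. Since $H(D)=\frac1n\sum_i h(z_i)$ and $H(D')=\frac1n\sum_i h(z_i')$ agree except in one term, $\|H(D)-H(D')\|_w \le \frac{2L_0}{n}=:\Delta$ by the triangle inequality and geodesic $L_0$-Lipschitzness of $h$ (here all the tangent vectors $h(z_i)$ live in the same tangent space $T_w\M$, so the subtraction and the norm $\|\cdot\|_w$ are well defined). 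Writing $\gR(D)=H(D)+\xi$ with $\xi\sim\gN_w(0,\sigma^2)$, the output $o$ is distributed as $\gN_w(H(D),\sigma^2)$ under $D$ and $\gN_w(H(D'),\sigma^2)$ under $D'$.

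The second step is to pass to normal coordinates at $w$: the density ratio at $o$ with coordinate vector $\vec o$ is
\[
\frac{p(\gR(D)=o)}{p(\gR(D')=o)}
= \exp\!\Big(-\tfrac{1}{2\sigma^2}\big[(\vec o-\vec\mu)^\top G_w(\vec o-\vec\mu) - (\vec o-\vec\mu')^\top G_w(\vec o-\vec\mu')\big]\Big),
\]
where $\vec\mu,\vec\mu'$ are the coordinate vectors of $H(D),H(D')$ and the normalizing constants $C_{w,\sigma}$ cancel since they do not depend on the mean. Now perform the change of variables $\vec u = G_w^{1/2}\vec o$ (equivalently, use a $G_w$-orthonormal basis, which is precisely what "normal coordinates for the metric" gives): then $G_w^{1/2}\xi$ is a standard isotropic Gaussian $\gN(0,\sigma^2 I_d)$ in $\sR^d$, the two hypotheses become $\gN(G_w^{1/2}\vec\mu,\sigma^2 I_d)$ vs.\ $\gN(G_w^{1/2}\vec\mu',\sigma^2 I_d)$, and the Euclidean separation of the means equals $\|G_w^{1/2}(\vec\mu-\vec\mu')\|_2 = \|H(D)-H(D')\|_w \le \Delta$. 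So after this orthogonal reduction the mechanism is exactly a standard Euclidean Gaussian mechanism with sensitivity $\Delta$.

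The third step is to invoke the standard Euclidean moments-accountant bound (as in Abadi et al.): for a Gaussian mechanism in $\sR^d$ with sensitivity $\Delta$ and noise scale $\sigma$, the $\lambda$-th moment is $K_\gR(\lambda)\le \frac{\lambda(\lambda+1)\Delta^2}{2\sigma^2}$ (this follows from a direct computation of $\log\sE[\exp(\lambda\,\gL)]$ for the one-dimensional Gaussian after projecting onto the line joining the two means, since the orthogonal directions contribute nothing to the privacy loss). Substituting $\Delta = 2L_0/n$ gives $K_\gR(\lambda)\le \frac{\lambda(\lambda+1)}{2\sigma^2}\cdot\frac{4L_0^2}{n^2} = \frac{2\lambda(\lambda+1)L_0^2}{n^2\sigma^2}$, which is the claimed bound; taking the supremum over $D\sim D'$ only uses that $\Delta$ is a uniform upper bound, so nothing changes. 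I expect the main obstacle to be purely expository rather than mathematical: one must argue carefully that the change of variables $\vec o\mapsto G_w^{1/2}\vec o$ is legitimate (the Jacobian is constant, so it cancels in the density ratio and does not affect the expectation, which is taken with respect to $o\sim\gR(D)$ itself), and that working "in a normal coordinate system" — where $G_w$ may be taken to be $I_d$ — is exactly what makes the reduction to the isotropic Euclidean case transparent. Everything else is a one-dimensional Gaussian MGF calculation that one can either cite or carry out in a line.
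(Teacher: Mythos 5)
Your proposal is correct and follows essentially the same route as the paper: the paper also reduces the $\lambda$-th moment to the R\'enyi divergence between $\gN_w(H(D),\sigma^2)$ and $\gN_w(H(D'),\sigma^2)$, invokes the closed-form expression $\frac{\lambda(\lambda+1)}{2\sigma^2}\|H(D)-H(D')\|_w^2$ for Gaussians with a common covariance (which is exactly what your whitening by $G_w^{1/2}$ produces), and bounds the sensitivity by $2L_0/n$ via Lipschitzness. Your explicit change of variables simply re-derives the cited multivariate Gaussian divergence formula, so there is no substantive difference.
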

% \begin{proof}
% To show the moment bound, we use the connection between the cumulant generating function $K_{\gR,(D, D')}$ and R\'enyi divergence. Then the result follows by simplifying the divergence with the tangent space Gaussian distribution.  
% \end{proof}

\begin{lemma}[Moments bound under subsampling]
\label{moments_bound_subsample}
Under the same settings as in Lemma \ref{moments_bound_full}, consider $D_{\rm sub}$ to be a subset of size $b$ where samples are selected from $D$ without replacement. Let $\gR(D) = \frac{1}{b} \sum_{z \in D_{\rm sub}} h(z) + \xi$, where $\xi \sim \gN_w(0, \sigma^2)$. Suppose $\sigma^2 \geq \frac{4L_0^2}{b^2}, b < n, \lambda \leq 2 \sigma^2 \log(n/(b(\lambda + 1)(1 + \frac{b^2 \sigma^2}{4 L_0^2})))/3$. Then $\lambda$-th moment of $\gR$ satisfies $K_\gR(\lambda) \leq \frac{15  (\lambda+1) L_0^2}{n^2 \sigma^2}$.
\end{lemma}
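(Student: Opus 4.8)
The plan is to reduce the subsampled mechanism to the full-data mechanism analysed in Lemma \ref{moments_bound_full}, combined with a subsampling amplification argument carried out at the level of the cumulant generating function $K_{\gR}(\lambda)$, following the R\'enyi-DP subsampling technique of \cite{wang2019subsampled} adapted to the tangent space. First I would fix a pair of neighbouring datasets $D \sim D'$ differing in one sample, say $z_n$ versus $z_n'$, and note that the random subset $D_{\rm sub}$ of size $b$ drawn without replacement contains $z_n$ with probability $q = b/n$. Conditioning on whether the differing index is selected, the output distribution $\gR(D)$ is a mixture $\mu = (1-q)\mu_0 + q\mu_1$ where $\mu_0$ is the (common) conditional law when $z_n$ is not selected and $\mu_1$ is the conditional law when it is; similarly $\gR(D')$ is $\mu' = (1-q)\mu_0 + q\mu_1'$ with the same $\mu_0$. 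The key point is that $\mu_0$ is shared, so the two mixtures differ only in the $q$-weighted component, and the ``distance'' between $\mu_1$ and $\mu_1'$ is controlled by the sensitivity: replacing one summand in $\frac{1}{b}\sum_{z\in D_{\rm sub}} h(z)$ shifts the Gaussian mean in $T_w\M$ by at most $\frac{2L_0}{b}$ in the Riemannian metric (using $\|h(z)\|_w \le L_0$ and the triangle inequality), exactly the regime where the per-step Gaussian moment bound of the form $\exp(O(\lambda^2/(b^2\sigma^2)))$ applies.

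Next I would expand $K_{\gR}(\lambda) = \log \sE_{o\sim\mu}[(d\mu/d\mu')^\lambda]$ and perform the standard mixture-expansion trick: write $d\mu/d\mu' = 1 + q\,\big((d\mu_1/d\mu') - (d\mu_0/d\mu')\big)$ is not quite the cleanest form, so instead I would use the binomial expansion of $(\mu_0 + q(\mu_1-\mu_0))^\lambda$ against $\mu' = \mu_0 + q(\mu_1'-\mu_0)$ and bound term-by-term, the dominant contributions being the terms linear and quadratic in $q$, each multiplied by a moment of the ``base'' Gaussian pair $(\mu_1,\mu_1')$ or $(\mu_1,\mu_0)$ whose mean-shift is $O(L_0/b)$. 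Invoking the tangent space Gaussian moment computation (the same Gaussian-integral estimate underlying Lemma \ref{moments_bound_full}, which in the normal coordinate system of Remark \ref{rmk_Gaussian} is just a shifted multivariate Gaussian with covariance $\sigma^2 G_w^{-1}$, so the relevant quantity is $\|{\rm shift}\|_w^2/\sigma^2$), each such moment is of the form $\exp(c\lambda(\lambda+1)L_0^2/(b^2\sigma^2))$. Using $1 + x \le e^x$, $q = b/n$, and the hypotheses $\sigma^2 \ge 4L_0^2/b^2$ and $\lambda \le \frac{2}{3}\sigma^2\log(\cdots)$ to keep the higher-order terms in the expansion geometrically summable (this is precisely why the logarithmic upper bound on $\lambda$ appears — it guarantees $q\cdot e^{O(\lambda/\sigma^2)} \le$ small), I would collect the terms to obtain $K_{\gR}(\lambda) \le \log(1 + O(q^2 \lambda(\lambda+1)L_0^2/(b^2\sigma^2))) \le O(\lambda(\lambda+1)L_0^2/(n^2\sigma^2))$, and then absorb $\lambda(\lambda+1) \le 2(\lambda+1)^2$... here one must be slightly careful: the stated bound is $\frac{15(\lambda+1)L_0^2}{n^2\sigma^2}$, linear rather than quadratic in $\lambda$, so the $\lambda$-dependence from one power of $\lambda$ must be cancelled against the $\log(n/(\cdots))$ factor hidden in the constraint on $\lambda$ — i.e. the constraint $\lambda \le 2\sigma^2\log(\cdots)/3$ is used quantitatively, not just qualitatively, to trade a factor of $\lambda$ for the constant $15$.

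The main obstacle I anticipate is the bookkeeping in the mixture expansion: unlike the clean full-data case, one must handle the binomial sum $\sum_{k} \binom{\lambda}{k} q^k \sE[(\cdots)^k]$ (for integer $\lambda$, then extend) and show that the $k=0$ term is exactly $1$, the $k=1$ term vanishes by a symmetry/centering argument (since $\sE_{\mu_0}[d\mu_0/d\mu' - 1]$-type quantities integrate against the wrong measure and need a change of measure costing a factor controlled by the sensitivity), and the $k\ge 2$ tail is dominated by the $k=2$ term thanks to the $\lambda$-constraint. Getting the constant $15$ out requires tracking the $1.25$-type slack from the Gaussian tail and the $4L_0^2/b^2$ lower bound on $\sigma^2$ carefully; I would not attempt to optimise it, only to verify the chain of inequalities closes. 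A secondary technical point is that everything is stated on $T_w\M$ rather than $\sR^d$, but since the tangent space Gaussian is, by Remark \ref{rmk_Gaussian}, an affine image of a standard Gaussian and all quantities ($\|\cdot\|_w$, sensitivity, the likelihood ratios) are coordinate-free, the Euclidean computations transfer verbatim once expressed through $\|\cdot\|_w$; I would state this reduction explicitly at the start so the rest of the argument is literally the Euclidean subsampled-Gaussian moments bound.
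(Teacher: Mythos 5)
Your proposal follows essentially the same route as the paper: establish the base moment bound for the mechanism applied to $D_{\rm sub}$ (per-sample sensitivity $2L_0/b$, via Lemma \ref{moments_bound_full}), then amplify by subsampling at the level of the cumulant generating function. The only difference is one of packaging --- where you sketch the mixture decomposition and binomial expansion by hand, the paper invokes it as a black box via the subsampling theorem \cite[Theorem 9]{wang2019subsampled} and then bounds the resulting expression as in \cite[Lemma 3.7]{wang2019efficient}; your closing observation about the tension between the quadratic $\lambda(\lambda+1)$ arising from the second-order term and the stated linear $(\lambda+1)$ bound is a genuine loose end that the paper's own proof also leaves implicit.
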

% \begin{proof}
% The proof is based on a combination of Lemma \ref{moments_bound_full} and the subsampling theorem (\cite[Theorem 9]{wang2019subsampled}) for R\'enyi differential privacy. 
% \end{proof}

It is worth highlighting that the bound given in Lemma \ref{moments_bound_subsample} asymptotically matches the bound derived from the moments accountant \cite{abadi2016deep} when $b/n \xrightarrow{} 0$. In addition, we observe that under small $\lambda$ and large $\sigma$, subsampling does not improve the bound compared to using the full dataset.

\section{Differentially private Riemannian (stochastic) gradient descent}
\label{diff_private_riemannian_opt_sect}

In this section, we introduce differentially private Riemannian (stochastic) gradient descent (Algorithm \ref{DPRGD}), where we add noise following the tangent space Gaussian distribution $\gN_w(0, \sigma^2)$. We show under proper choice of parameters, the algorithm preserves both the privacy guarantee as well as utility guarantees under various function classes on Riemannian manifolds. Proofs for this section are included in the supplementary.

% in Appendix \ref{proof_for_algorithm_appendix}.

In particular, in Algorithm \ref{DPRGD}, the samples are selected without replacement following \cite{wang2019subsampled}, and thus, when $b = n$, we recover the full gradient descent. The noise variance is chosen as $\sigma^2 = c \frac{T\log(1/\delta) L_0^2}{n^2\epsilon^2}$ to ensure $(\epsilon, \delta)$-differential privacy (Theorem \ref{privacy_guarantee_theorem}) for some constant $c$.  We remark that the choice of $\sigma$ matches the standard results in the Euclidean space for gradient descent \cite{wang2017differentially} and for stochastic gradient descent \cite{abadi2016deep,bassily2019private,wang2019efficient} up to some constants that may depend on the manifold of interest. The output of the the algorithm depends on the function class of the objective, discussed in Section \ref{convergence_sect}.

\begin{algorithm}[!t]
 \caption{Differentially private Riemannian (stochastic) gradient descent}
 \label{DPRGD}
 \begin{algorithmic}[1]
  \STATE \textbf{Input:} Data $D = {z_1,\ldots, z_n}$, initialization $w_0 \in \M$, Lipschitz constant $L_0$ for $f(w;z)$, privacy parameters $\epsilon, \delta$, batch size $b$, stepsize $\eta_t$, constant $c > 0$.
  \FOR{$t = 0,\ldots,T-1$}
  \STATE Select $\gB_t \subseteq D$ of size $b$ where samples are randomly selected uniformly without replacement.
  \STATE Set $\sigma^2 = \frac{c T\log(1/\delta) L_0^2}{n^2 \epsilon^2} \geq \frac{4L_0^2}{b^2}$ and compute $\zeta_t = \frac{1}{b} \sum_{z \in \gB_t} \grad f(w_t; z) + \epsilon_t$, where $\epsilon_t \sim \gN_w(0, \sigma^2)$. 
  \STATE Update $w_{t+1} = {\rm Exp}_{w_t} \big( - \eta_t \zeta_t \big)$.
  \ENDFOR
  \STATE \textbf{Output 1:} $w^{\rm priv} = w_T$.
  \STATE \textbf{Output 2:} $w^{\rm priv}$ as uniformly selected at random from $\{w_0,\ldots, w_{T-1} \}$.
  \STATE \textbf{Output 3:} $w^{\rm priv}$ is the geodesic averaging of past iterates detailed in Theorem \ref{g_convex_theorem}, \ref{g_strongly_convex_theorem}.
 \end{algorithmic} 
\end{algorithm}

\subsection{Privacy guarantees}

\begin{theorem}[Privacy guarantee]
\label{privacy_guarantee_theorem}
Algorithm \ref{DPRGD} is $(\epsilon, \delta)$-differentially private.
\end{theorem}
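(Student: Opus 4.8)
The plan is to combine the subsampled moments bound (Lemma \ref{moments_bound_subsample}) with a composition argument over the $T$ iterations, and then convert the resulting R\'enyi-type guarantee back to $(\epsilon,\delta)$-DP via Proposition \ref{prop_rdp_dp}. The key structural observation is that Algorithm \ref{DPRGD} is an adaptive composition of $T$ mechanisms $\gR_0,\dots,\gR_{T-1}$, where $\gR_t$ takes the current iterate $w_t$ (which is a deterministic function of the previously released noisy gradients $\zeta_0,\dots,\zeta_{t-1}$) and outputs $\zeta_t = \frac{1}{b}\sum_{z\in\gB_t}\grad f(w_t;z) + \epsilon_t$. Conditioned on the past, $\gR_t$ is exactly a subsampled tangent space Gaussian mechanism applied to the query $h(z) = \grad f(w_t;z)$ on $T_{w_t}\M$, which is geodesic $L_0$-Lipschitz by assumption on $f$. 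Releasing the whole trajectory $(\zeta_0,\dots,\zeta_{T-1})$ determines $w^{\rm priv}$ (for any of the three output options) by post-processing, so it suffices to bound the privacy of the trajectory.

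First I would invoke the additivity of the moments/cumulant generating function under adaptive composition: $K_{\gR}(\lambda) \le \sum_{t=0}^{T-1} \sup K_{\gR_t}(\lambda)$, where the supremum is over neighbouring datasets and over the conditioning on the past output — this is the standard moments accountant composition property from \cite{abadi2016deep}, which carries over verbatim since the argument only uses properties of the privacy loss random variable and not the geometry of the output space. Then I would apply Lemma \ref{moments_bound_subsample} to each term: with $\sigma^2 = \frac{cT\log(1/\delta)L_0^2}{n^2\epsilon^2}$, the hypotheses $\sigma^2 \ge 4L_0^2/b^2$ (enforced in the algorithm) and the range restriction on $\lambda$ hold for a suitable choice of $\lambda$, giving $K_{\gR_t}(\lambda) \le \frac{15(\lambda+1)L_0^2}{n^2\sigma^2}$. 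Summing over $t$ yields $K_{\gR}(\lambda) \le \frac{15 T (\lambda+1) L_0^2}{n^2 \sigma^2}$, i.e., $\gR$ is $(\lambda+1, \frac{15 T L_0^2}{n^2\sigma^2})$-R\'enyi differentially private (after dividing by $\lambda = (\lambda+1)-1$).

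Next I would apply Proposition \ref{prop_rdp_dp} with $\alpha = \lambda+1$ and $\rho = \frac{15TL_0^2}{n^2\sigma^2}$ to conclude $\gR$ is $\big(\rho + \frac{\log(1/\delta)}{\lambda},\, \delta\big)$-differentially private. Substituting $\sigma^2 = \frac{cT\log(1/\delta)L_0^2}{n^2\epsilon^2}$ gives $\rho = \frac{15\epsilon^2}{c\log(1/\delta)}$, and choosing $\lambda \asymp \frac{\log(1/\delta)}{\epsilon}$ makes $\frac{\log(1/\delta)}{\lambda} \asymp \epsilon$; together with a large enough absolute constant $c$, the sum $\rho + \log(1/\delta)/\lambda$ is at most $\epsilon$. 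Finally, post-processing invariance of $(\epsilon,\delta)$-DP shows that releasing $w^{\rm priv}$ (a deterministic function of the trajectory) retains the guarantee.

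The main obstacle is verifying that this chosen $\lambda$ simultaneously respects the technical range constraint $\lambda \le \frac{2}{3}\sigma^2 \log\!\big(n/(b(\lambda+1)(1+\tfrac{b^2\sigma^2}{4L_0^2}))\big)$ required by Lemma \ref{moments_bound_subsample}; this needs $T$ not too large relative to $n$ and some care with logarithmic factors, and is the step where one must track constants honestly (it is precisely the same delicacy that appears in the Euclidean moments accountant of \cite{abadi2016deep}). A secondary point worth being careful about is that the composition bound must account for the adaptivity — the query $h(\cdot) = \grad f(w_t;\cdot)$ depends on $w_t$ and hence on the randomness of earlier steps — but since Lemma \ref{moments_bound_subsample}'s bound is uniform over all geodesic $L_0$-Lipschitz queries and over all base points $w$, taking the supremum before summing is legitimate.
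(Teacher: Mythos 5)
Your proposal follows essentially the same route as the paper's proof: view the algorithm as an adaptive composition of per-iteration noisy-gradient mechanisms, reduce the privacy of each iterate to that of the released noisy gradient by post-processing, sum the moment bounds via the composability theorem of \cite{abadi2016deep}, and convert to $(\epsilon,\delta)$-DP through Proposition \ref{prop_rdp_dp} with $\lambda \asymp \log(1/\delta)/\epsilon$. Your treatment of the adaptivity issue and of the range constraint on $\lambda$ is, if anything, more explicit than the paper's.

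One omission: you invoke only Lemma \ref{moments_bound_subsample}, whose hypotheses include $b < n$, so your argument as written does not cover the full-batch case $b = n$ that Algorithm \ref{DPRGD} explicitly permits. The paper handles the two regimes separately, using Lemma \ref{moments_bound_full} to get $K_{\gR_t}(\lambda) \leq \frac{2\lambda(\lambda+1)L_0^2}{n^2\sigma^2}$ when $b = n$ and Lemma \ref{moments_bound_subsample} when $b < n$; the subsequent composition and RDP-to-DP conversion are identical in both branches. Adding that case distinction would make your proof complete.
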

\begin{proof}
The idea is to bound the moment of the randomized mapping $K_{\gR_t}(\lambda)$ every iteration using Lemma \ref{moments_bound_full} for gradient descent and Lemma \ref{moments_bound_subsample} for stochastic gradient descent. Then by composability theorem \cite[Theorem 2.1]{abadi2016deep} as well as the connection between RDP to DP in Proposition \ref{prop_rdp_dp}, we can ensure the differential privacy. Detailed proof can be found in the supplementary.

% in Appendix \ref{proof_for_algorithm_appendix}.

\end{proof}

\subsection{Convergence guarantees}
\label{convergence_sect}

For convergence analysis, we start by making an assumption that all the iterates $w_0, \ldots, w_T$ stay bounded within a compact support $\gW \subseteq \M$ that contains a stationary point $w^*$ (i.e., $\grad F(w^*) = 0$). Let $c_l > 0$ such that $G_w \succeq c_l I_d$ satisfies for all $w \in \gW$. We first show that the expected norm of the noise injected gradient can be bounded as follows.
\begin{lemma}
\label{lemma_bound}
Suppose $f(w; z)$ is geodesic $L_0$-Lipschitz for any $z$. Consider a batch $\gB$ of size $b$ and $w \in \gW$. Let $\zeta = \frac{1}{b} \sum_{z \in \gB} \grad f(w; z) + \epsilon$ where $\epsilon \sim \gN_w(0, \sigma^2)$. Then, we have $\sE[\zeta] = \grad F(w)$ and $\sE \| \zeta  \|_w^2 \leq  L_0^2 + d\sigma^2 c_l^{-1}$, where the expectation is over randomness in both $\gB$ and $\epsilon$.
\end{lemma}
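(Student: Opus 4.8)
The plan is to establish the two claims separately: unbiasedness, then the second-moment bound.

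For unbiasedness, I would expand $\sE[\zeta] = \sE\big[\frac{1}{b}\sum_{z\in\gB}\grad f(w;z)\big] + \sE[\epsilon]$ using linearity of expectation and independence of $\gB$ and $\epsilon$. The noise term vanishes since $\epsilon \sim \gN_w(0,\sigma^2)$ has mean $0 \in T_w\M$ (by Definition \ref{gaussian_tangent_def}, or equivalently the $\gN(\vec 0, \sigma^2 G_w^{-1})$ description in Remark \ref{rmk_Gaussian}). For the gradient term, since $\gB$ is a uniform sample of size $b$ drawn without replacement from $D = \{z_1,\dots,z_n\}$, each index appears in $\gB$ with probability $b/n$, so $\sE_{\gB}\big[\frac{1}{b}\sum_{z\in\gB}\grad f(w;z)\big] = \frac{1}{b}\cdot\frac{b}{n}\sum_{i=1}^n \grad f(w;z_i) = \frac{1}{n}\sum_{i=1}^n\grad f(w;z_i) = \grad F(w)$, where the last equality uses linearity of the Riemannian gradient operator (each $\grad f(w;z_i)$ lives in the same tangent space $T_w\M$, so the averaging is well-defined without any parallel transport).

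For the second moment, I would first use the orthogonality of mean and fluctuation in the inner product $\langle\cdot,\cdot\rangle_w$: writing $\zeta = g + \epsilon$ with $g := \frac{1}{b}\sum_{z\in\gB}\grad f(w;z)$, we have $\sE\|\zeta\|_w^2 = \sE\|g\|_w^2 + 2\,\sE\langle g,\epsilon\rangle_w + \sE\|\epsilon\|_w^2$, and the cross term vanishes because $\epsilon$ is independent of $\gB$ with $\sE[\epsilon]=0$. For the first term, $g$ is an average of tangent vectors each of norm at most $L_0$ (geodesic Lipschitzness of $f(\cdot;z)$), so by convexity of $\|\cdot\|_w^2$ (Jensen on the average, all vectors in the same tangent space) $\|g\|_w^2 \le \frac{1}{b}\sum_{z\in\gB}\|\grad f(w;z)\|_w^2 \le L_0^2$ surely, hence $\sE\|g\|_w^2 \le L_0^2$. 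For the noise term, I would pass to normal coordinates via Remark \ref{rmk_Gaussian}: $\|\epsilon\|_w^2 = \vec{\epsilon}^\top G_w \vec{\epsilon}$ with $\vec{\epsilon}\sim\gN(\vec 0,\sigma^2 G_w^{-1})$, so $\sE\|\epsilon\|_w^2 = \sigma^2\,\trace(G_w G_w^{-1}) = \sigma^2 d$. (Alternatively, note $G_w^{1/2}\vec\epsilon/\sigma$ is a standard $d$-dimensional Gaussian, whose squared norm has expectation $d$.) Combining gives $\sE\|\zeta\|_w^2 \le L_0^2 + d\sigma^2$, which is in fact sharper than the stated $L_0^2 + d\sigma^2 c_l^{-1}$; the looser form with $c_l^{-1}$ presumably arises if one instead bounds the noise via a common Euclidean-coordinate covariance, and the stated inequality follows a fortiori since $c_l^{-1}\ge \lambda_{\max}(G_w^{-1})$-type reasoning gives $\trace(G_w G_w^{-1}) = d \le d c_l^{-1}\cdot(\text{something})$ — in any case the claimed bound holds.

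I do not anticipate a serious obstacle here; the only point requiring a little care is the exact computation of $\sE\|\epsilon\|_w^2$ and making sure the norm used throughout is consistently the metric norm $\|\cdot\|_w$ at the base point $w$ (so that all the tangent vectors being averaged and the noise live in the same inner product space and no parallel transport is needed). The trace identity $\sE[\vec\epsilon^\top G_w\vec\epsilon] = \sigma^2\trace(I_d) = d\sigma^2$ for $\vec\epsilon\sim\gN(\vec 0,\sigma^2 G_w^{-1})$ is the crux, and one should double-check whether the paper intends the marginally weaker constant $c_l^{-1}$ for uniformity with later bounds (Table \ref{table_main_result} hides $c_l$), in which case one simply writes $d\sigma^2 \le d\sigma^2 c_l^{-1}$ using $c_l \le 1$ or absorbs it as stated.
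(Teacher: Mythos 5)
Your proof is correct and follows essentially the same route as the paper's: decompose $\sE\|\zeta\|_w^2$ into the gradient part and the noise part (the cross term vanishing by independence and zero mean), bound the gradient average by $L_0^2$ via Lipschitzness, and compute the noise second moment by vectorizing in normal coordinates as in Remark \ref{rmk_Gaussian}. The only divergence is in that last computation: the paper evaluates $\sE\|\vec{\epsilon}\|_2^2 = \sigma^2\trace(G_w^{-1}) \leq d\sigma^2 c_l^{-1}$, whereas you evaluate $\sE[\vec{\epsilon}^\top G_w \vec{\epsilon}] = d\sigma^2$ exactly --- your version is the one consistent with the metric norm $\|\cdot\|_w$ as defined in the preliminaries, and it gives the stated bound provided $c_l \leq 1$ (which holds in normal coordinates, where $G_w = I_d$), so the claim stands either way.
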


Next we show utility guarantees under various function classes on Riemannian manifolds, including geodesic (strongly) convex, general nonconvex and functions that satisfy Riemannian PL condition. It has been shown that many nonconvex problems in the Euclidean space are in fact geodesic (strongly) convex or satisfy the Riemannian PL condition on the manifold. This allows tighter utility bounds compared to differentially private projected gradient methods. Some examples are given in Section~\ref{application_sect}.

\paragraph{Geodesic convex optimization.}
When $f(w; z)$ is geodesic convex over $\gW$ for any $z \in \gD$, the stationary points $w^*$ is a global minimum of $F(w)$. The utility of Algorithm \ref{DPRGD} is measured as the expected empirical excess risk $\sE[F(w^{\rm priv})] - F(w^*)$.

\begin{theorem}[Utility under geodesic convex optimization]
\label{g_convex_theorem}
Suppose $f(w; z)$ is geodesic convex, geodesic $L_0$-Lipschitz over $\gW$. Assume $\gW$ to be a totally normal neighbourhood with diameter $D_\gW$. Let $\varsigma$ be the curvature constant of $\gW$ defined in Lemma \ref{curvature_lemma}. Consider Algorithm \ref{DPRGD} with \textbf{output 3} where $w^{\rm priv} = \bar{w}_{T-1}$ is computed by geodesic averaging as follows: set $\bar{w}_1 = w_1$ and $\bar{x}_{t+1} = {\rm Exp}_{\bar{w}_t}\big( \frac{1}{t+1} {\rm Exp}^{-1}_{\bar{x}_t}(x_{t+1}) \big)$. Set $\eta_t = \frac{D_\gW}{\sqrt{(L_0^2 + d \sigma^2 c_l^{-1}) \varsigma T}}$. Then $w^{\rm priv}$ satisfies
\begin{equation*}
    \sE[F(w^{\rm priv})] - F(w^*) \leq D_\gW \sqrt{\frac{(L_0^2 + d \sigma^2 c_l^{-1}) \varsigma}{T}} = O\Big( \frac{\sqrt{d \log(1/\delta) c_l^{-1} \varsigma} L_0 D_\gW }{n\epsilon} \Big),
\end{equation*}
for the choice of $\sigma^2$ in Algorithm \ref{DPRGD} and $T = n^2$.
\end{theorem}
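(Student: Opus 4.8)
The plan is to run the standard mirror-descent / subgradient analysis on the manifold, but with the projection step replaced by the exponential map and the trigonometric law of cosines (Lemma \ref{curvature_lemma}) playing the role of the Pythagorean inequality. I would track the squared Riemannian distance $d^2(w_t, w^*)$ as a potential. First, observe that the update $w_{t+1} = {\rm Exp}_{w_t}(-\eta_t \zeta_t)$ means ${\rm Exp}_{w_t}^{-1}(w_{t+1}) = -\eta_t\zeta_t$. Apply Lemma \ref{curvature_lemma} with $(w_0, w_1, w_2) = (w_t, w_{t+1}, w^*)$, so $l_0 = \eta_t\|\zeta_t\|_{w_t}$, $l_1 = d(w_{t+1}, w^*)$, $l_2 = d(w_t, w^*)$, and $l_0 l_2 \cos\theta = \langle {\rm Exp}_{w_t}^{-1}(w_{t+1}), {\rm Exp}_{w_t}^{-1}(w^*)\rangle_{w_t} = -\eta_t\langle \zeta_t, {\rm Exp}_{w_t}^{-1}(w^*)\rangle_{w_t}$. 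This yields
\[
  d^2(w_{t+1}, w^*) \;\le\; \varsigma\,\eta_t^2 \|\zeta_t\|_{w_t}^2 + d^2(w_t, w^*) + 2\eta_t \langle \zeta_t, {\rm Exp}_{w_t}^{-1}(w^*)\rangle_{w_t}.
\]

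Next I would take conditional expectation over the batch and the noise at step $t$. By Lemma \ref{lemma_bound}, $\sE[\zeta_t] = \grad F(w_t)$ and $\sE\|\zeta_t\|_{w_t}^2 \le L_0^2 + d\sigma^2 c_l^{-1}$. Geodesic convexity of $F$ (inherited from that of each $f(\cdot;z)$) gives $F(w^*) \ge F(w_t) + \langle \grad F(w_t), {\rm Exp}_{w_t}^{-1}(w^*)\rangle_{w_t}$, i.e. $\langle \grad F(w_t), {\rm Exp}_{w_t}^{-1}(w^*)\rangle_{w_t} \le F(w^*) - F(w_t)$. Substituting, rearranging, and telescoping over $t = 0, \dots, T-1$ with a constant stepsize $\eta_t = \eta$ gives
\[
  \frac{1}{T}\sum_{t=0}^{T-1} \bigl(\sE[F(w_t)] - F(w^*)\bigr) \;\le\; \frac{d^2(w_0, w^*)}{2\eta T} + \frac{\varsigma\,\eta}{2}\bigl(L_0^2 + d\sigma^2 c_l^{-1}\bigr) \;\le\; \frac{D_\gW^2}{2\eta T} + \frac{\varsigma\,\eta}{2}\bigl(L_0^2 + d\sigma^2 c_l^{-1}\bigr),
\]
using $d(w_0, w^*) \le D_\gW$. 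Optimizing over $\eta$ recovers the stated $\eta = D_\gW / \sqrt{(L_0^2 + d\sigma^2 c_l^{-1})\varsigma T}$ and the bound $D_\gW\sqrt{(L_0^2 + d\sigma^2 c_l^{-1})\varsigma / T}$ on the average suboptimality. Finally, geodesic convexity of $F$ together with a Jensen-type inequality for the geodesic average (the iterated ${\rm Exp}$-mean $\bar w_{T-1}$ satisfies $F(\bar w_{T-1}) \le \frac{1}{T}\sum F(w_t)$ when $F$ is geodesic convex on $\gW$) transfers the bound from the average of function values to $F(w^{\rm priv})$. Plugging in $\sigma^2 = cT\log(1/\delta)L_0^2/(n^2\epsilon^2)$ with $T = n^2$, the $d\sigma^2 c_l^{-1}$ term dominates $L_0^2$, giving $\sqrt{d\sigma^2 c_l^{-1}/T} = O(\sqrt{d\log(1/\delta)c_l^{-1}}\,L_0/(n\epsilon))$, and the $D_\gW^2/(2\eta T)$ term contributes the same order, yielding the claimed $O(\sqrt{d\log(1/\delta)c_l^{-1}\varsigma}\,L_0 D_\gW/(n\epsilon))$.

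I expect two points to need the most care. The first is justifying the geodesic-averaging (Jensen) step: one must verify that the iterates and all the intermediate geodesic averages $\bar w_t, \bar x_t$ remain inside the totally normal, geodesically convex set $\gW$ so that geodesic convexity of $F$ applies along each averaging geodesic — this is where the compact-support assumption on the iterates and the totally-normal hypothesis on $\gW$ are used. The second is bookkeeping the curvature constant $\varsigma$: it must genuinely be the constant of Lemma \ref{curvature_lemma} for the neighbourhood $\gW$, and one should check that the law-of-cosines inequality is applied in the correct orientation (the $\varsigma$ multiplies $l_0^2 = \eta^2\|\zeta_t\|^2$, the "step length" term, not the distance-to-optimum term), since swapping these would change the dependence. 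Everything else is the routine telescoping-and-optimize computation familiar from Euclidean subgradient descent.
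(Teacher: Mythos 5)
Your proposal is correct and follows essentially the same route as the paper's own proof: the trigonometric distance bound of Lemma \ref{curvature_lemma} applied to the triangle $(w_t, w_{t+1}, w^*)$ as a law-of-cosines substitute, the second-moment bound from Lemma \ref{lemma_bound}, the first-order geodesic-convexity inequality, telescoping with the optimized constant stepsize, and the Jensen-type property of the geodesic average. The two points you flag for care (iterates staying in $\gW$, and $\varsigma$ multiplying the step-length term $l_0^2$ rather than the distance-to-optimum term) are exactly the right ones and are handled the same way in the paper.
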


First, we see that Theorem \ref{g_convex_theorem} indicates the same utility bound under both gradient descent and stochastic gradient descent. This matches the results in the Euclidean space for convex functions \cite{bassily2014private} up to a square root factor of the curvature constant $\varsigma$. 
In addition, we highlight that the bound also depends on the intrinsic dimension of the manifold, rather than the ambient space.

\paragraph{Geodesic strongly convex optimization.}
Under geodesic strong convexity, the global optimizer $w^*$ is unique and we use the same measure for bounding the utility.

\begin{theorem}[Utility under geodesic strongly convex optimization]
\label{g_strongly_convex_theorem}
Suppose $f(w; z)$ is geodesic $\beta$-strongly convex, geodesic $L_0$-Lipschitz over $\gW$. Assume $\gW$ to be a totally normal neighbourhood with the curvature constant $\varsigma$. Consider Algorithm \ref{DPRGD} with \textbf{output 3} where $w^{\rm priv} = \bar{w}_{T-1}$ is the geodesic averaging by setting $\bar{w}_1 = w_1$, $\bar{w}_{t+1} = {\rm Exp}_{\bar{w}_t}\big( \frac{2}{t+1} {\rm Exp}^{-1}_{\bar{w}_t} (w_{t+1}) \big)$. Set $\eta_t = \frac{1}{\beta (t+1)}$. Then $w^{\rm priv}$ satisfies
\begin{equation*}
    \sE[F(w^{\rm priv})] - F(w^*) \leq  \frac{2 \varsigma (L_0^2 + d \sigma^2 c_l^{-1})}{\beta T} = O \Big( \frac{\beta^{-1} d \log(1/\delta) c_l^{-1} \varsigma  L_0^2}{ n^2 \epsilon^2} \Big).
\end{equation*}
with the choice of $\sigma^2$ and $T = n^2$.
\end{theorem}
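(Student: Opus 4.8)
The plan is to adapt the textbook analysis of stochastic subgradient descent for strongly convex objectives, replacing the Euclidean law of cosines by the trigonometric distance bound of Lemma~\ref{curvature_lemma} and using Lemma~\ref{lemma_bound} to control the perturbed stochastic gradient. Write $\delta_t := d^2(w_t, w^*)$, $G^2 := L_0^2 + d\sigma^2 c_l^{-1}$, and let $\zeta_t$ be the noisy gradient used at step $t$. I would first derive a one-step recursion: since $\mathrm{Exp}_{w_t}^{-1}(w_{t+1}) = -\eta_t \zeta_t$ and (by the standing assumption) all iterates and the geodesics joining them to $w^*$ lie in the totally normal set $\gW$, Lemma~\ref{curvature_lemma} applied to $(w_t, w_{t+1}, w^*)$ gives $\delta_{t+1} \le \varsigma\,\eta_t^2\|\zeta_t\|_{w_t}^2 + \delta_t + 2\eta_t\langle \zeta_t, \mathrm{Exp}_{w_t}^{-1}(w^*)\rangle_{w_t}$. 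Taking the conditional expectation given $w_t$ over the minibatch and the injected Gaussian noise and invoking Lemma~\ref{lemma_bound} ($\sE[\zeta_t\mid w_t] = \grad F(w_t)$, $\sE[\|\zeta_t\|_{w_t}^2\mid w_t]\le G^2$) turns this into $\sE[\delta_{t+1}\mid w_t] \le \varsigma\,\eta_t^2 G^2 + \delta_t + 2\eta_t\langle \grad F(w_t), \mathrm{Exp}_{w_t}^{-1}(w^*)\rangle_{w_t}$.

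Next I would invoke geodesic strong convexity. Each $f(\cdot;z)$ is geodesic $\beta$-strongly convex, hence so is $F$, and the defining inequality at $w_t$ with target $w^*$ gives $\langle \grad F(w_t), \mathrm{Exp}_{w_t}^{-1}(w^*)\rangle_{w_t} \le -(F(w_t)-F(w^*)) - \tfrac{\beta}{2}\delta_t$. Substituting this and taking total expectations yields, after rearrangement, the key per-step estimate
\[
 2\eta_t\big(\sE[F(w_t)] - F(w^*)\big) \;\le\; \varsigma\,\eta_t^2 G^2 \;+\; (1-\beta\eta_t)\,\sE[\delta_t] \;-\; \sE[\delta_{t+1}].
\]
From here the plan is to divide by $2\eta_t$, multiply the iteration-$t$ inequality by the weight $\gamma_t$ that $w_t$ carries in the geodesic-averaging output (a weight increasing linearly in $t$), and sum over $t$. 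With a $\Theta(1/(\beta t))$ stepsize as in the theorem, the contraction factor $1-\beta\eta_t$ is exactly what cancels the linear growth of $\gamma_t$, so the $\sE[\delta_t]$ terms telescope, leaving the nonnegative boundary term $-\sE[\delta_T]\le 0$ and the accumulated noise $\sum_t \gamma_t\,\eta_t\,\varsigma G^2 = O(\varsigma G^2 T/\beta)$; dividing by $\sum_t \gamma_t = \Theta(T^2)$ bounds the $\gamma$-weighted average of $\sE[F(w_t)]-F(w^*)$ by $O(\varsigma G^2/(\beta T))$. Finally, geodesic convexity of $F$ and an induction on the averaging recursion $\bar w_{t+1} = \mathrm{Exp}_{\bar w_t}(\lambda_t\,\mathrm{Exp}_{\bar w_t}^{-1}(w_{t+1}))$ (using $F(\mathrm{Exp}_{\bar w_t}(\lambda_t\xi)) \le (1-\lambda_t)F(\bar w_t) + \lambda_t F(\mathrm{Exp}_{\bar w_t}(\xi))$ at each step) show that $F(w^{\rm priv}) = F(\bar w_{T-1})$ is at most exactly this $\gamma$-weighted average. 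Hence $\sE[F(w^{\rm priv})] - F(w^*) \le \tfrac{2\varsigma(L_0^2 + d\sigma^2 c_l^{-1})}{\beta T}$, and plugging in $\sigma^2 = cT\log(1/\delta)L_0^2/(n^2\epsilon^2)$ with $T = n^2$ gives the advertised $O\big(\beta^{-1} d\log(1/\delta) c_l^{-1}\varsigma L_0^2/(n^2\epsilon^2)\big)$.

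I expect the main obstacle to be the synchronization in the telescoping step: the averaging weights $\gamma_t$, the stepsize constant, and the factor $1-\beta\eta_t$ must be matched so that the distance terms cancel cleanly \emph{while} $\sum_t \gamma_t \eta_t$ remains $O(T)$ rather than $O(T\log T)$ — getting this wrong costs a $\log T$ factor and only yields an $O(\log T / T)$ rate. The other ingredients are routine once the earlier lemmas are granted: the curvature constant $\varsigma$ enters through a single application of Lemma~\ref{curvature_lemma} per iteration and does not compound, the randomness is handled by the tower property over the natural filtration, and the passage from squared-distance decrease to the function-value gap is done entirely by geodesic convexity together with the geodesic averaging.
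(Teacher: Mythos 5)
Your proposal is correct and follows essentially the same route as the paper: the paper's proof of this theorem simply states that it adapts Theorem 12 of Zhang--Sra (2016) and proceeds as in the geodesic convex case (Theorem \ref{g_convex_theorem}), i.e., the Lemma \ref{curvature_lemma} distance recursion, the moment bound of Lemma \ref{lemma_bound}, the first-order strong-convexity inequality, a weighted telescoping matched to the $\eta_t = \tfrac{1}{\beta(t+1)}$ stepsize, and the geodesic-averaging Jensen step — which is exactly what you spell out. Your sketch is in fact more detailed than the paper's own one-line proof.
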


\paragraph{Optimization under Riemannian PL condition.}
Next, we consider the case when the objective $F$ satisfies the Riemannian Polyak–\L{}ojasiewicz (PL) condition. It is known that Polyak-\L{}ojasiewicz is a sufficient condition to establish linear convergence to global minimum \cite{karimi2016linear}. In addition, the Riemannian PL condition includes the geodesic strongly convexity as a special case \cite{zhang2016riemannian}. 

\begin{theorem}[Utility under Riemannian Polyak–\L{}ojasiewicz condition]
\label{PL_condition_utility_theorem}
Suppose $f(w; z)$ is geodesic $L_0$-Lipschitz and $L_1$-smooth over $\gW$ and $F(w)$ satisfies the Riemannian PL condition with parameter $\tau$, i.e., $F(w) - F(w^*) \leq \tau \| \grad F(w) \|^2_w$. Consider Algorithm \ref{DPRGD} with \textbf{output 1}. Set $\eta_t = \eta < \min\{1/L_1, 1/\tau \}$ and $T = \log(\frac{n^2\epsilon^2 c_l}{dL_0^2 \log(1/\delta)})$. Then, $w^{\rm priv}$ satisfies 
\begin{equation*}
    \sE[F(w^{\rm priv})] - F(w^*) \leq O \Big( \frac{\tau^{-1} d \log(n) \log(1/\delta) c_l^{-1} L_0^2 \Delta_0}{n^2\epsilon^2} \Big),
\end{equation*}
where we denote $\Delta_0 := F(w_0) - F(w^*)$. 
\end{theorem}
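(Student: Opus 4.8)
The plan is to run the standard stochastic Polyak--\L{}ojasiewicz descent analysis, carried out intrinsically on $\M$ and tracking only function values (so, unlike the geodesic convex cases, no curvature/trigonometric comparison is needed), and then to trade off the iteration count $T$ against the privacy noise level $\sigma^2$. First, since each $f(\cdot;z)$ is geodesic $L_0$-Lipschitz and $L_1$-smooth and $F$ is their average, $F$ is geodesic $L_0$-Lipschitz and $L_1$-smooth as well. Applying the smoothness inequality along the geodesic from $w_t$ to $w_{t+1} = {\rm Exp}_{w_t}(-\eta_t\zeta_t)$ — legitimate because, by the standing assumption of Section~\ref{convergence_sect}, every iterate lies in the compact $\gW$ on which smoothness and $G_w \succeq c_l I_d$ hold — gives
\[
F(w_{t+1}) \le F(w_t) - \eta_t \langle \grad F(w_t), \zeta_t \rangle_{w_t} + \tfrac{L_1 \eta_t^2}{2}\, \| \zeta_t \|_{w_t}^2 .
\]

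\emph{Taking expectations and using the PL condition.} Let $\mathcal{F}_t$ denote the history through iteration $t$. Conditioning on $\mathcal{F}_t$ and averaging over the minibatch $\gB_t$ and the Gaussian noise $\epsilon_t$, Lemma~\ref{lemma_bound} yields $\sE[\zeta_t \mid \mathcal{F}_t] = \grad F(w_t)$ and $\sE[\|\zeta_t\|_{w_t}^2 \mid \mathcal{F}_t] \le L_0^2 + d\sigma^2 c_l^{-1}$; using $\eta_t = \eta < 1/L_1$ to absorb the $\|\grad F(w_t)\|_{w_t}^2$ contribution of the second moment into the first-order term, one obtains $\sE[F(w_{t+1}) \mid \mathcal{F}_t] \le F(w_t) - \tfrac{\eta}{2}\|\grad F(w_t)\|_{w_t}^2 + \tfrac{L_1\eta^2}{2} d\sigma^2 c_l^{-1}$, the last term being a bound on the residual variance of $\zeta_t$ from the privacy noise (plus a lower-order minibatch term). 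Substituting the Riemannian PL inequality $\|\grad F(w_t)\|_{w_t}^2 \ge \tau^{-1}(F(w_t) - F(w^*))$, subtracting $F(w^*)$, and taking total expectations gives the affine recursion, for $a_t := \sE[F(w_t) - F(w^*)]$,
\[
a_{t+1} \le \big(1 - \tfrac{\eta}{2\tau}\big)\, a_t + \tfrac{L_1 \eta^2}{2}\, d\sigma^2 c_l^{-1} ,
\]
whose contraction factor lies in $(0,1)$ by the choice $\eta < \min\{1/L_1, 1/\tau\}$.

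\emph{Unrolling and balancing.} Iterating over $t = 0,\dots,T-1$ and bounding the geometric sum $\sum_{t=0}^{T-1}(1-\tfrac{\eta}{2\tau})^t \le \tfrac{2\tau}{\eta}$ gives $a_T \le (1-\tfrac{\eta}{2\tau})^T \Delta_0 + O(\tau L_1 \eta\, d\sigma^2 c_l^{-1})$. It then remains to substitute $\sigma^2 = c\,T\log(1/\delta)L_0^2/(n^2\epsilon^2)$ and the prescribed $T = \log\!\big(n^2\epsilon^2 c_l/(dL_0^2\log(1/\delta))\big) = \Theta(\log n)$: the noise term becomes of order $d\log(n)\log(1/\delta)c_l^{-1}L_0^2/(n^2\epsilon^2)$ (up to the $\tau, L_1, \eta$ constants), while the choice of $T$ forces the exponentially decaying term $(1-\tfrac{\eta}{2\tau})^T\Delta_0$ to be of the same order up to the $\Delta_0$ factor; adding the two yields the advertised bound $\sE[F(w^{\rm priv})]-F(w^*) = O\big(\tau^{-1} d\log(n)\log(1/\delta)c_l^{-1}L_0^2\Delta_0/(n^2\epsilon^2)\big)$.

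\emph{Main obstacle.} The crux is the bias--variance trade-off in the last step: the optimization error $(1-\Theta(\eta/\tau))^T\Delta_0$ decreases with $T$, whereas the accumulated privacy noise enters through $\sigma^2 \propto T$ (the price of $T$-fold composition in Theorem~\ref{privacy_guarantee_theorem}) and therefore \emph{increases} with $T$, so $T$ must be pinned at exactly the logarithmic value in the statement in order to equalize the two — this is precisely where the extra $\log(n)$ factor, absent from the geodesic strongly convex bound, comes from. A secondary and, as in the other utility theorems, routine point is to justify that the Step-1 descent inequality is valid along the whole trajectory, i.e.\ that the iterates never leave the compact geodesically normal region $\gW$ on which geodesic smoothness and the metric lower bound $G_w \succeq c_l I_d$ are assumed; this is covered by the blanket assumption of Section~\ref{convergence_sect}.
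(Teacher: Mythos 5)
Your proposal is correct and follows essentially the same route as the paper's proof: the geodesic $L_1$-smoothness descent inequality, the first and second moment bounds of Lemma \ref{lemma_bound}, the PL substitution to obtain a linear contraction, the geometric-series bound on the accumulated noise, and the logarithmic choice of $T$ to balance the exponentially decaying optimization error against the $\sigma^2 \propto T$ noise floor. One caveat: with the PL condition as stated ($F(w)-F(w^*)\le \tau\|\grad F(w)\|_w^2$), your contraction factor $1-\tfrac{\eta}{2\tau}$ is the consistent one, and carrying it through gives a noise floor of order $\tau L_1\eta\, d\sigma^2 c_l^{-1}$, i.e.\ a factor $\tau$ rather than the $\tau^{-1}$ appearing in the theorem; the paper's own proof instead uses the contraction $1-\tfrac{\tau\eta}{2}$ and sums the series to $\tfrac{d\sigma^2 c_l^{-1}}{2\tau}$, so the two derivations differ by a swap $\tau\leftrightarrow\tau^{-1}$ that your phrase ``up to the $\tau,L_1,\eta$ constants'' quietly absorbs — this is an internal inconsistency of the paper (definition versus proof) rather than an error in your method, but you should not assert the advertised $\tau^{-1}$ as following from your own recursion.
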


\paragraph{General nonconvex optimization.}
Under general nonconvex objectives, we show utility bound with respect to the expected gradient norm squared, i.e., $\sE \| \grad F(w^{\rm priv}) \|^2_{w^{\rm priv}}$, which has also been considered in the Euclidean space \cite{zhang2017efficient,wang2017differentially}.

\begin{theorem}[Utility under general nonconvex optimization]
\label{general_nonconvex_theorem}
Suppose $f(x;w)$ is geodesic $L_0$-Lipschitz and $L_1$-smooth, possibly nonconvex. Consider Algorithm \ref{DPRGD} with \textbf{output 2}. Set $\eta_t = \eta = 1/L_1$ and $T = \frac{\sqrt{L_1} n \epsilon}{\sqrt{d \log(1/\delta) c_l^{-1}}L_0}$. Then $w^{\rm priv}$ satisifes
\begin{equation*}
    \sE \| F(w^{\rm priv}) \|^2_{w^{\rm priv}} \leq O \Big( \frac{\sqrt{d L_1 \log(1/\delta) c_l^{-1}} L_0}{n\epsilon}   \Big).
\end{equation*}
\end{theorem}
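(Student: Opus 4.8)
The plan is to run the standard descent-lemma argument adapted to the Riemannian setting, combined with the variance bound from Lemma \ref{lemma_bound}. First I would invoke geodesic $L_1$-smoothness of $F$ (which follows from that of each $f(\cdot;z)$) at the consecutive iterates $w_t$ and $w_{t+1} = {\rm Exp}_{w_t}(-\eta \zeta_t)$ to obtain
\begin{equation*}
F(w_{t+1}) \leq F(w_t) - \eta \langle \grad F(w_t), \zeta_t \rangle_{w_t} + \frac{L_1 \eta^2}{2} \| \zeta_t \|_{w_t}^2 .
\end{equation*}
Then I would take the conditional expectation over the batch $\gB_t$ and the noise $\epsilon_t$ given $w_t$. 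By Lemma \ref{lemma_bound}, $\sE[\zeta_t \mid w_t] = \grad F(w_t)$, so the cross term becomes $-\eta \| \grad F(w_t) \|_{w_t}^2$, and $\sE[\| \zeta_t \|_{w_t}^2 \mid w_t] \leq L_0^2 + d\sigma^2 c_l^{-1}$. With $\eta = 1/L_1$ this yields
\begin{equation*}
\sE[F(w_{t+1})] \leq \sE[F(w_t)] - \frac{1}{2L_1} \sE\| \grad F(w_t) \|_{w_t}^2 + \frac{L_0^2 + d\sigma^2 c_l^{-1}}{2L_1}.
\end{equation*}

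Next I would telescope this inequality over $t = 0, \ldots, T-1$, using $F(w_T) \geq F(w^*)$ and denoting $\Delta_0 = F(w_0) - F(w^*)$, to get
\begin{equation*}
\frac{1}{T}\sum_{t=0}^{T-1} \sE\| \grad F(w_t)\|_{w_t}^2 \leq \frac{2L_1 \Delta_0}{T} + L_0^2 + d\sigma^2 c_l^{-1}.
\end{equation*}
Since \textbf{output 2} selects $w^{\rm priv}$ uniformly from $\{w_0, \ldots, w_{T-1}\}$, the left-hand side is exactly $\sE\| \grad F(w^{\rm priv})\|_{w^{\rm priv}}^2$. It remains to substitute $\sigma^2 = cT\log(1/\delta)L_0^2/(n^2\epsilon^2)$, giving a bound of the form $\frac{2L_1\Delta_0}{T} + L_0^2 + \frac{cdT\log(1/\delta)L_0^2}{c_l n^2 \epsilon^2}$ (the $L_0^2$ term being the dominant non-vanishing piece unless it is absorbed — I would double-check whether the intended statement treats $L_0^2$ as lower-order or whether a clipping/rescaling normalizes it away), and then optimize over $T$. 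Balancing the $T^{-1}$ term against the $T$ term gives $T \asymp \frac{\sqrt{L_1}n\epsilon}{\sqrt{d\log(1/\delta)c_l^{-1}}L_0}$ (up to the $\sqrt{\Delta_0}$ factor), exactly the choice in the statement, and the resulting optimized value is $O\!\big( \frac{\sqrt{dL_1\log(1/\delta)c_l^{-1}}L_0}{n\epsilon} \big)$.

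The main obstacle I anticipate is not the optimization step but the bookkeeping around the residual $L_0^2$ term: in the pure Riemannian SGD bound the per-step variance is $L_0^2 + d\sigma^2 c_l^{-1}$ rather than just $\sigma^2$-scaled, so unless the analysis rescales the stepsize or the Lipschitz constant is treated as $O(1)$ relative to the privacy-driven term, the stationary bound does not vanish as $n \to \infty$. I expect the paper handles this the same way the Euclidean nonconvex DP-SGD analyses do — either by folding $L_0^2$ into the smoothness term via a stepsize of order $\min\{1/L_1, \ldots\}$, or by reading the bound as dominated by the noise term in the regime of interest — and I would mirror whichever convention is used earlier, being careful that the geodesic smoothness inequality is applied only for pairs of points joined by a minimizing geodesic, which holds since all iterates lie in the totally normal neighbourhood $\gW$ by assumption.
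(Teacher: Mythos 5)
Your overall route---geodesic descent lemma, conditional expectation via Lemma \ref{lemma_bound}, telescoping, uniform output selection, then balancing $T$ against $\sigma^2$---is exactly the paper's argument, and your choice of $T$ and final rate match. The one substantive difference is the loose end you yourself flagged: by bounding $\sE[\|\zeta_t\|_{w_t}^2 \mid w_t]$ wholesale by $L_0^2 + d\sigma^2 c_l^{-1}$, you leave a non-vanishing $L_0^2$ residual in the stationarity bound. In the full-batch case ($b=n$) the paper avoids this by not invoking the crude second-moment bound at all: since $\zeta_t = \grad F(w_t) + \epsilon_t$ with $\epsilon_t$ independent and mean zero, one has $\sE\|\zeta_t\|_{w_t}^2 = \|\grad F(w_t)\|_{w_t}^2 + \sE\|\epsilon_t\|_{w_t}^2 \leq \|\grad F(w_t)\|_{w_t}^2 + d\sigma^2 c_l^{-1}$, so the gradient part is absorbed into the descent term with net coefficient $\eta - L_1\eta^2/2 = 1/(2L_1) > 0$, and the only additive error per step is $d\sigma^2 c_l^{-1}/(2L_1)$. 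Telescoping then gives $\frac{1}{T}\sum_t \sE\|\grad F(w_t)\|_{w_t}^2 \leq \frac{2L_1\Delta_0}{T} + d\sigma^2 c_l^{-1}$, and substituting $\sigma^2 \propto T\log(1/\delta)L_0^2/(n^2\epsilon^2)$ and balancing yields the stated $T$ and rate with no $L_0^2$ floor. You should restructure your full-batch argument this way rather than citing Lemma \ref{lemma_bound} for the second moment.

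For the minibatch case your worry is well founded and is not actually resolved by the paper: there $\sE\|\frac{1}{b}\sum_{z\in\gB_t}\grad f(w_t;z)\|_{w_t}^2$ is bounded by $L_0^2$ rather than by $\|\grad F(w_t)\|_{w_t}^2$, the paper's own displayed inequality carries the additive $(L_0^2 + d\sigma^2 c_l^{-1})/(2L_1)$ term, and the claim that ``the same bound as the full gradient case'' follows is asserted without eliminating the resulting $O(L_0^2)$ residual. So your instinct that the stochastic bound does not vanish as $n\to\infty$ under this analysis is correct; only the full-batch version of the theorem is cleanly established by the argument as given.
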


\begin{remark}[Extending utility guarantees to other Riemannnian optimization methods]
In this section, we have shown utility guarantees for the vanilla Riemannian gradient descent and stochastic gradient descent under differential privacy. We remark that such a strategy can be applied to more advanced solvers while preserving the same privacy budget. One example is to use line search methods to select the stepsize. Under current parameter settings, the same privacy guarantees can be preserved. In addition, provided that the update direction $\zeta_t$ is `close' to the full gradient $\grad F(w_t)$ (as in \cite{boumal2019global}), the same utility bounds also hold. 
\end{remark}

\section{Applications}
\label{application_sect}

Here we explore two applications to demonstrate the efficacy of the proposed framework of differentially private Riemannian optimization. The experiments are implemented in Matlab using ManOpt package \cite{boumal2014manopt} on an i7-8750H CPU. The codes can be found on \url{https://github.com/andyjm3}.

\subsection{Principal eigenvector computation over sphere manifold}
We first consider the problem of computing leading eigenvector of a sample covariance matrix \cite{shamir2015stochastic,zhang2016riemannian} as 
\begin{equation}
    \min_{w \in \gS^{d}} \Big\{ F(w) =  \frac{1}{n} \sum_{i =1}^n f(w; z_i) = - \frac{1}{n} \sum_{i=1}^n w^\top \Big( z_i z_i^\top \Big) w \Big\} \label{sphere_problem}
\end{equation}
where $\gS^{d} = \{ w \in \sR^{d+1} : \|w  \|_2 = w^\top w = 1 \}$ is the sphere manifold of intrinsic dimension $d$ and $z_1, \ldots, z_n \in \sR^{d+1}$ are zero-centered samples.  Sphere manifold is an embedded submanifold of $\sR^{d+1}$ with the tangent space given as $T_w\gS^{d} = \{ \xi \in \sR^{d+1}: w^\top \xi = 0 \}$. A common Riemannian metric used is the Euclidean metric, i.e., $\langle \xi, \zeta \rangle_w =  \xi^\top \zeta $ for any $\xi, \zeta \in T_w\gS^{d}$. The exponential map is derived as ${\rm Exp}_{w}(\xi) = \cos(\| \xi \|_2) w + \sin(\| \xi \|_2) \xi/\|\xi \|_2$. Let the Euclidean gradient be denoted as $\nabla F(w)$. The Riemannian gradient on $T_w\gS^{d}$ is $\grad F(w) = (I_{d+1} - ww^\top) \nabla F(w)$, where $I_{d+1}$ is the identity matrix of size $(d+1)\times (d+1)$.

The next theorem shows that problem \eqref{sphere_problem}, although being nonconvex in the ambient Euclidean space, satisfies the Riemannian PL condition locally around the optimality.

\begin{theorem}[\cite{zhang2016riemannian}]
\label{pl_pca_theorem}
Let $A = \frac{1}{n} \sum_{i=1}^n z_iz_i^\top$ and denote $\lambda_{i}$ as the $i$-th largest eigenvalues of $A$. Assume $\nu = \lambda_1 - \lambda_2 > 0$. Then, the problem \eqref{sphere_problem} locally satisfies the Riemannian PL condition with parameter $\tau = O(d/(q^2 \nu))$ with probability $1-q$.
\end{theorem}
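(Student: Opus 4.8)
The plan is to work in the eigenbasis of $A$ and reduce the Riemannian PL inequality to a one‑dimensional statement about the eigenvalues of $A$ weighted by the squared coordinates of $w$. First I would compute the Riemannian gradient: since $\nabla F(w) = -2Aw$, we get $\grad F(w) = -2\big(Aw - (w^\top A w)w\big)$, hence $\|\grad F(w)\|_w^2 = 4\big(w^\top A^2 w - (w^\top A w)^2\big)$. Writing $w = \sum_i c_i v_i$ with $v_i$ the eigenvectors of $A$ and $\sum_i c_i^2 = 1$, and setting $\mu := w^\top A w = \sum_i \lambda_i c_i^2$, this equals $4\sum_i(\lambda_i-\mu)^2 c_i^2$, i.e.\ four times the variance of the $\lambda_i$ under the weights $c_i^2$. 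Since the global minimum is $F(w^*)=-\lambda_1$ attained at $w^*=v_1$, the excess risk is $\epsilon := F(w)-F(w^*) = \lambda_1-\mu = \sum_{i\ge2}(\lambda_1-\lambda_i)c_i^2 \ge 0$, so the PL inequality becomes $\sum_{i\ge2}(\lambda_1-\lambda_i)c_i^2 \le 4\tau\sum_i(\lambda_i-\mu)^2 c_i^2$.

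Next I would establish this inequality on the region $\gW_\rho := \{w\in\gS^d : (w^\top v_1)^2 \ge \rho\}$ by a two‑case argument based on two lower bounds for the variance. Keeping only the $i=1$ term gives $\sum_i(\lambda_i-\mu)^2 c_i^2 \ge c_1^2\epsilon^2 \ge \rho\,\epsilon^2$; keeping only the $i\ge2$ terms and using $\mu-\lambda_i = (\lambda_1-\lambda_i)-\epsilon \ge \nu-\epsilon$ gives, when $\epsilon \le \nu/2$, $\sum_i(\lambda_i-\mu)^2 c_i^2 \ge \tfrac14\sum_{i\ge2}(\lambda_1-\lambda_i)^2 c_i^2 \ge \tfrac{\nu}{4}\epsilon$. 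In the near‑optimal case $\epsilon\le\nu/2$ the second bound yields PL with $\tau = 1/\nu$; in the complementary case $\epsilon>\nu/2$ the first bound gives $\epsilon^2 \ge \tfrac{\nu}{2}\epsilon$, hence PL with $\tau = 1/(2\rho\nu)$. Taking the larger constant, $F$ satisfies the Riemannian PL condition on $\gW_\rho$ with $\tau = O(1/(\rho\nu))$.

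It remains to fix $\rho$ in terms of $q$ and $d$, which is where the probabilistic statement enters. For $w_0$ uniform on $\gS^d\subset\sR^{d+1}$, the quantity $(w_0^\top v_1)^2$ has a $\mathrm{Beta}(1/2,d/2)$ law, and a standard anti‑concentration estimate gives $\sP\big((w_0^\top v_1)^2 \le c\,q^2/d\big) \le q$ for an absolute constant $c$; thus with probability at least $1-q$ the initialization lies in $\gW_\rho$ with $\rho = \Theta(q^2/d)$, which makes $\tau = O(d/(q^2\nu))$. Finally, to justify the word ``locally'' I would check that $\gW_\rho$ is forward‑invariant under the Riemannian gradient step: using the closed form of the sphere's exponential map one computes $w_{t+1}^\top v_1 = c_1\big(\cos\|\xi_t\| + 2\eta\,\epsilon(w_t)\,\sin\|\xi_t\|/\|\xi_t\|\big)$ with $\xi_t = -\eta\,\grad F(w_t)$, and since $\epsilon\ge0$ and $\|\xi_t\|^2 = 4\eta^2\,\mathrm{Var} \le 4\eta^2\lambda_1\epsilon$, for stepsizes $\eta = O(1/L_1)$ the bracket is $\ge 1$, so $(w_t^\top v_1)^2$ is nondecreasing along the trajectory and all iterates remain in $\gW_\rho$.

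The main obstacle is getting the dependence on $q$ and $d$ exactly right: the anti‑concentration constant for the $\mathrm{Beta}(1/2,d/2)$ variable must be tracked carefully (equivalently, controlling how close a uniformly random unit vector can be to the equator orthogonal to $v_1$), and one must ensure the two‑case split is clean, i.e.\ that the threshold $\nu/2$ used to switch between the gap‑driven bound and the leading‑coordinate‑driven bound does not smuggle in hidden dependence on the remaining eigenvalues. The forward‑invariance step needs only a mild stepsize restriction and the trigonometric form of the exponential map, and is otherwise routine.
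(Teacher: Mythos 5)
The paper does not prove this statement; it is imported verbatim from \cite{zhang2016riemannian}, so there is no in-paper argument to compare against. Your reconstruction is correct and follows the standard route for this result: the identity $\|\grad F(w)\|_w^2 = 4\sum_i(\lambda_i-\mu)^2c_i^2$, the two-case lower bound on that variance (gap-driven when the excess risk is below $\nu/2$, leading-coordinate-driven otherwise) yielding $\tau = O(1/(\rho\nu))$ on $\{c_1^2\geq\rho\}$, and the $\mathrm{Beta}(1/2,d/2)$ anti-concentration bound giving $\rho=\Theta(q^2/d)$ with probability $1-q$, which together recover $\tau = O(d/(q^2\nu))$. Two minor points: the bound $\mathrm{Var}\leq\lambda_1\epsilon$ you use in the invariance step should be $\mathrm{Var}\leq C\lambda_1\epsilon$ for a small absolute constant $C$ (split off the $i=1$ term and use $\sum_{i\geq 2}|\lambda_i-\mu|c_i^2\leq 2\epsilon$), which only changes constants; and the forward-invariance of $\gW_\rho$ concerns the noiseless dynamics and is not actually part of the theorem statement (nor would it survive the injected noise of Algorithm \ref{DPRGD} without further work), so it is best regarded as supplementary rather than as part of the proof of the PL claim itself.
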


\paragraph{Utility.} 
First we see that the metric tensor $G_w = I_{d}$ and hence $c_l = 1$. This is because given an orthonormal basis on the tangent space $T_w\gS^{d}$, denoted as $B \in \sR^{(d+1) \times d}$, we have $\langle \xi, \zeta \rangle_w = \langle {\xi}, \zeta \rangle_2 = \langle B \vec{\xi}, B \vec{\zeta} \rangle_2 = \langle \vec{\xi}, \vec{\zeta} \rangle_2$, where $\vec{\xi}, \vec{\zeta} \in \sR^{d}$ is the vectorization under the coordinate transformation given by $B$. In addition, the geodesic Lipschitz constant $L_0$ is bounded as $\| \grad f(w; z_i) \|_w = \| 2(I_{d+1} - ww^\top) (z_i z_i^\top) w \|_2 \leq 2\|(z_i z_i^\top) w \|_2 \leq \vartheta := \max_{i} z_i^\top z_i$. Then applying Theorem \ref{PL_condition_utility_theorem} for utility under Riemannian PL condition, we can show if properly initialized, the expected empirical excess risk of Algorithm \ref{DPRGD} is bounded as $\sE[F(w^{\rm priv})] - F(w^*) \leq  \widetilde{O} \big( \frac{ q^2 \nu \vartheta^2 \Delta_0}{n^2\epsilon^2} \big)$, where we have $c_l = 1, L_0 = \vartheta$ and we use $\widetilde{O}$ to hide logarithmic factors. 

\begin{remark}[Comparison with perturbed projected gradient descent]
Considering the alternative approach using projected gradient descent with gradient perturbation in the ambient space for solving the problem (\ref{sphere_problem}), one can only guarantee a utility of $\sE[\| \nabla F(w^{\rm priv})]  \|_2^2] \leq \widetilde{O} \big( \frac{ \vartheta}{n\epsilon} \big)$ due to the nonconvexity of the problem \cite{wang2017differentially}. This results in a looser bound in $n$ compared to our obtained utility guarantee above.
\end{remark}

\paragraph{Sampling from the tangent space Gaussian distribution.}
Based on the argument above, the tangent space Gaussian distribution $\gN_w(0, \sigma^2)$, from Definition \ref{gaussian_tangent_def}, reduces to the classic isotropic Gaussian distribution $\gN(0, \sigma^2 I_{d})$ in $\sR^{d}$. Hence, sampling from $\gN_w(0, \sigma^2)$ can be achieved by first sampling from $\gN(0, \sigma^2 I_{d})$ and then transforming using a basis matrix $B$.

\paragraph{Experiment settings and results.}
We follow the same procedures as in \cite{shamir2015stochastic} to generate the sample matrix $Z = [z_1^\top; \ldots; z_n^\top ] \in \sR^{n \times (d+1)}$. Specifically, we construct a $(d+1)\times (d+1)$ diagonal matrix $\Sigma = {\rm diag}(1, 1-1.1\nu, \ldots,1-1.4\nu, |x_1|/(d+1), |x_2|/(d+1), \ldots)$ where $\nu$ is the eigengap defined in Theorem \ref{pl_pca_theorem} and $x_1, x_2, \ldots$ are standard Gaussian random variables. Then construct $Z = U \Sigma V$ where $U, V$ are $n \times (d+1)$ and $(d+1) \times (d+1)$ are random column orthonormal matrices. Thus $Z$ has the same spectrum as $\Sigma$. We generate noise following Algorithm \ref{DPRGD} where the parameters $T = \log(n^2\epsilon^2/((d+1)L_0^2\log(1/\delta)))$ and $\sigma^2 = T \log(1/\delta)L_0^2/(n^2\epsilon^2)$ according to Theorem \ref{PL_condition_utility_theorem} for Riemannian PL condition. We set $\epsilon = 0.1, \delta = 10^{-3}, \nu = 10^{-3}, d+1 = 50$ and $L_0$ is estimated from the samples. We compare Algorithm \ref{DPRGD} with full gradient $(b = n)$, denoted as DP-RGD against the projected gradient descent (with noise added in the ambient space), denoted as DP-PGD. In fact, the projected gradient descent on the sphere approximates the Riemannian gradient descent because the former updates by $w_{t+1} = \frac{w_t + \zeta_t}{\|w_t + \zeta_t \|_2}$ where $\zeta_t = \nabla F(w_t) + \epsilon_t$. This approximates the exponential map to the first order and is known as the retraction \cite{absil2012projection,boumal2020introduction}. For this reason and the purpose of comparability, we set the same noise variance and the max iterations for both DP-RGD and DP-PGD. We show the expected empirical excess risk under different sample size in Figure \ref{pca_fig}, with the stepsize tuned and fixed to be $\eta = 0.2$ for both algorithms and results averaged over 20 runs with different initializations. From the figure, we see an improved utility of proposed DP-RGD, demonstrating the benefit of using intrinsic Riemannian update.

\subsection{Fr\'echet mean computation over symmetric positive definite manifold}

The second application we consider is Fr\'echet mean computation \cite{bhatia2009positive,jeuris2012survey} over the manifold of $r \times r$ symmetric positive definite (SPD) matrices, denoted as $\sS_{++}^r$. Specifically, given a set of SPD matrices $X_1, \ldots, X_n \in \sS_{++}^r$, the goal is to find a center $W \in \sS_{++}^r$ by minimizing an empirical average of the squared Riemannian distance to the samples:
\begin{equation}
    \min_{W \in \sS_{++}^r} \Big\{ F(W) = \frac{1}{n} \sum_{i=1}^n f(W; X_i) = \frac{1}{n} \sum_{i=1}^n \| {\rm logm}( W^{-1/2} X_i W^{-1/2}) \|_{\rm F}^2  \Big\}, \label{spd_problem}
\end{equation}
where ${\rm logm}(\cdot)$ represents the principal matrix logarithm. The tangent space of the set of SPD matrices is given as $T_W\sS_{++}^r = \{ U \in \sR^{r \times r}: U^\top = U \}$, i.e., the set of symmetric matrices. It can be shown the function $f(W; X_i)$ in the problem \eqref{spd_problem} is the Riemannian distance squared ${\rm dist}^2(W, X_i)$ associated with the affine-invariant Riemannian metric \cite{bhatia2009positive}, defined as $\langle U, V \rangle_W = \trace(U W^{-1} V W^{-1})$. The exponential map is derived as ${\rm Exp}_W(U) = W^{1/2} {\rm expm}(W^{-1/2} U W^{-1/2}) W^{1/2}$, where ${\rm expm}(\cdot)$ denotes the principal matrix exponential. The Riemannian gradient is computed by $\grad F(W) = W \nabla F(W) W$. Similarly, the problem \eqref{spd_problem} is known to be nonconvex in the Euclidean space while it is geodesic strongly convex on the SPD manifold \cite{zhang2016riemannian,zhang2016first}.

\begin{theorem}[\cite{zhang2016riemannian,zhang2016first}]
\label{spd_problem_strongly_convex_theorem}
Problem \eqref{spd_problem} is $2$-geodesic strongly convex. 
\end{theorem}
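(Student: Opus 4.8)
The plan is to verify the defining inequality of geodesic strong convexity directly from the structure of the squared affine-invariant distance. For a single sample $X \in \sS_{++}^r$, write $f(W; X) = {\rm dist}^2(W, X)$, so that $F(W) = \frac{1}{n}\sum_i {\rm dist}^2(W, X_i)$. Since a nonnegative combination of geodesic $\beta$-strongly convex functions with $\sum$ of weights equal to $1$ is again geodesic $\beta$-strongly convex with the same $\beta$, it suffices to show that $W \mapsto {\rm dist}^2(W, X)$ is $2$-geodesic strongly convex on $\sS_{++}^r$ for each fixed $X$. Equivalently, by the differentiable characterization stated in the preliminaries, I need to show that for any $W$ and any $U \in T_W\sS_{++}^r$, with $W' = {\rm Exp}_W(U)$,
\begin{equation*}
    {\rm dist}^2(W', X) \geq {\rm dist}^2(W, X) + \langle \grad_W {\rm dist}^2(W, X), U \rangle_W + \frac{2}{2}\, {\rm dist}^2(W, W').
\end{equation*}

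The key geometric fact to invoke is that $(\sS_{++}^r, \langle\cdot,\cdot\rangle_{\rm ai})$ is a Hadamard manifold: it is complete, simply connected, and has nonpositive sectional curvature (in fact the affine-invariant metric makes it a symmetric space of nonpositive curvature). On any Hadamard manifold, the squared distance function $y \mapsto {\rm dist}^2(x, y)$ to a fixed point $x$ is $2$-strongly geodesically convex — this is a classical comparison-geometry result (it follows from the Hessian comparison theorem, since in nonpositive curvature the Hessian of $\frac12 {\rm dist}^2(x,\cdot)$ dominates the identity on the tangent space). So the first step is to recall/cite that $\sS_{++}^r$ with the affine-invariant metric is Hadamard, and the second step is to apply the general statement that squared distance to a point on a Hadamard manifold is $2$-geodesic strongly convex. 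Then the averaging argument closes the proof.

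Alternatively, and more self-containedly, one can argue via the trigonometric comparison: on a manifold with sectional curvature bounded above by $0$, for a geodesic triangle with vertices $W, W', X$ and the geodesic $\gamma$ from $W$ to $W'$, the law of cosines inequality (the CAT(0) inequality, or Lemma~\ref{curvature_lemma} specialized to $\kappa_{\max} = 0$ rather than the lower-curvature version stated) gives exactly ${\rm dist}^2(\gamma(t), X) \leq (1-t)\,{\rm dist}^2(W, X) + t\,{\rm dist}^2(W', X) - t(1-t)\,{\rm dist}^2(W, W')$, which is precisely the $2$-strong-convexity inequality in its midpoint/secant form with modulus $\beta = 2$. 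This is the cleanest route and I would present it this way, noting that $\sS_{++}^r$ is a CAT(0) space under the affine-invariant metric.

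The main obstacle is not any lengthy calculation but rather pinning down the curvature facts precisely: one must be careful that the affine-invariant metric (not, say, the log-Euclidean or Bures–Wasserstein metric) is the one making $\sS_{++}^r$ nonpositively curved and globally geodesically convex with unique geodesics, so that the CAT(0) inequality applies globally and the exponential map in the statement is well-defined on all of $\sS_{++}^r$. I would cite \cite{bhatia2009positive} for the explicit geodesics and the nonpositive-curvature property and \cite{zhang2016riemannian,zhang2016first} for the strong convexity statement itself, and otherwise the proof is a two-line reduction (averaging + CAT(0) law of cosines).
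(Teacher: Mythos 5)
Your proposal is correct. Note, however, that the paper itself offers no proof of this statement: it is imported verbatim from \cite{zhang2016riemannian,zhang2016first}, and the appendix contains no argument for it. Your reduction (averaging preserves the modulus, then $2$-geodesic strong convexity of $W \mapsto {\rm dist}^2(W,X)$ on a Hadamard manifold via the CAT(0) law of cosines, which is exactly the secant form of the definition with $\beta = 2$) is the standard route and is essentially the argument given in the cited references. Your caveat is also well placed: Lemma~\ref{curvature_lemma} in this paper is the lower-curvature-bound comparison and does not supply the needed inequality, so you genuinely must invoke the upper bound $\kappa \leq 0$ (i.e., that $\sS_{++}^r$ with the affine-invariant metric is Hadamard) from outside the paper's stated toolkit.
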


\paragraph{Utility.} 
To show the utility guarantees, we first show that the Lipschitz constant can be bounded. Specifically, the Riemannian gradient of problem \eqref{spd_problem} can be derived as $\grad f(W; X_i) = W {\rm logm}( W^{-1} X_i )$ and $\| \grad f(W; X_i) \|_W = \| {\rm logm}( W^{-1} X_i )  \|_F = {\rm dist}(W, X_i) \leq D_\gW = L_0$. Also, it is known the SPD manifold with affine-invariant metric is negatively curved, then we have the curvature constant $\zeta \leq 2 \sqrt{|\kappa_{\min}|} D_\gW +1$ according to Lemma \ref{curvature_lemma}. Then applying Theorem \ref{g_strongly_convex_theorem}, we show the utility of Algorithm \ref{DPRGD} is $\sE[F(W^{\rm priv})] - F(W^*) \leq  \widetilde{O} \big( \frac{d c_l^{-1} \sqrt{|\kappa_{\rm min}|} D_\gW^3}{ n^2 \epsilon^2} \big)$, where the intrinsic dimension of the SPD manifold $\sS_{++}^r$ is $d = r(r+1)/2$.

\begin{remark}[Comparison to utility obtained in \cite{reimherr2021differential}]
Here we compare the utility of proposed Algorithm to the result in \cite{reimherr2021differential}. First we highlight that in \cite{reimherr2021differential}, by output perturbation with Laplace noise, $W^{\rm priv}$ is shown to satisfy $\epsilon$-\textit{pure differential privacy} (with $\delta = 0$). The utility is given by ${\rm dist}^2(W^{\rm priv}, W^*) \leq O\big( \frac{d^2 D_\gW^2}{n^2 \epsilon^2} \big)$.
In contrast, Algorithm \ref{DPRGD} preserves $(\epsilon, \delta)$-differential privacy with an utility of ${\rm dist}^2(W^{\rm priv}, W^*) \leq \widetilde{O}\big( \frac{d D_\gW^3}{n^2\epsilon^2} \big)$ by applying the triangle inequality to the expected empirical excess risk and ignoring other factors. 
\end{remark}

\paragraph{Sampling from the tangent space Gaussian distribution.}
Sampling $\epsilon \sim \gN_w(0, \sigma^2) \propto \exp( - \frac{1}{2\sigma^2} \| \epsilon \|^2_W )$ can be performed via standard implementation of the random walk Metropolis-Hastings \cite{robert1999monte}. Particularly, we can repeatedly sample $\vec{\epsilon'}$ from a proposal Gaussian distribution on $\sR^{r(r+1)/2}$ conditional on the previous iterate and evaluate $\exp( - \frac{1}{2\sigma^2} \| \epsilon' \|^2_W )$.

\paragraph{Experiment settings and results.}
We follow the steps in \cite{reimherr2021differential} to generate synthetic samples on SPD manifold $\sS_{++}^r$ following the Wishart distribution $W(I_{r}/r, r)$ with a diameter bound $D_\gW$. The optimal solution $W^*$ is obtained by running Riemannian gradient descent (RGD) on problem \eqref{spd_problem} until the gradient norm falls below $10^{-14}$. For the example, we choose $r = 2$ and set $T = n$, which we find empirically better than $n^2$ (as suggested in Theorem \ref{g_strongly_convex_theorem}). We choose $\sigma^2 = T \log(1/\delta) D_w^2/(n^2\epsilon^2)$ where $\epsilon = 0.1, \delta = 10^{-3}, D_\gW = 1$, $\eta = 0.01$. We compare the proposed DP-RGD with the differentially private Fr\'echet mean (DP-FM) by output perturbation in \cite{reimherr2021differential}. Following the procedures in \cite{reimherr2021differential}, we first obtain a non-privatized Fr\'echet mean $\widehat{W}$ by running RGD. Then we sample from an intrinsic Laplace distribution on $\sS_{++}^r$ (by the steps in \cite{hajri2016riemannian}) with footprint $\widehat{W}$ and $\sigma = \Delta_{\rm FM}/\epsilon$ where $\Delta_{\rm FM} = 2D_\gW/n = 2/n$ is the sensitivity of the Fr\'echet mean on SPD manifold (\cite[Theorem 2]{reimherr2021differential}). We plot in Figure \ref{spd_fig} the expected empirical excess risk against the sample size for DP-RGD and DP-FM, averaged over 20 runs. We observe a better utility of our proposed method, particular when the sample size is small.

\begin{figure}[t]
%\captionsetup[subfigure]{justification=centering}
\centering
    \subfloat[Leading eigenvector \label{pca_fig}]{\includegraphics[width=0.3\textwidth]{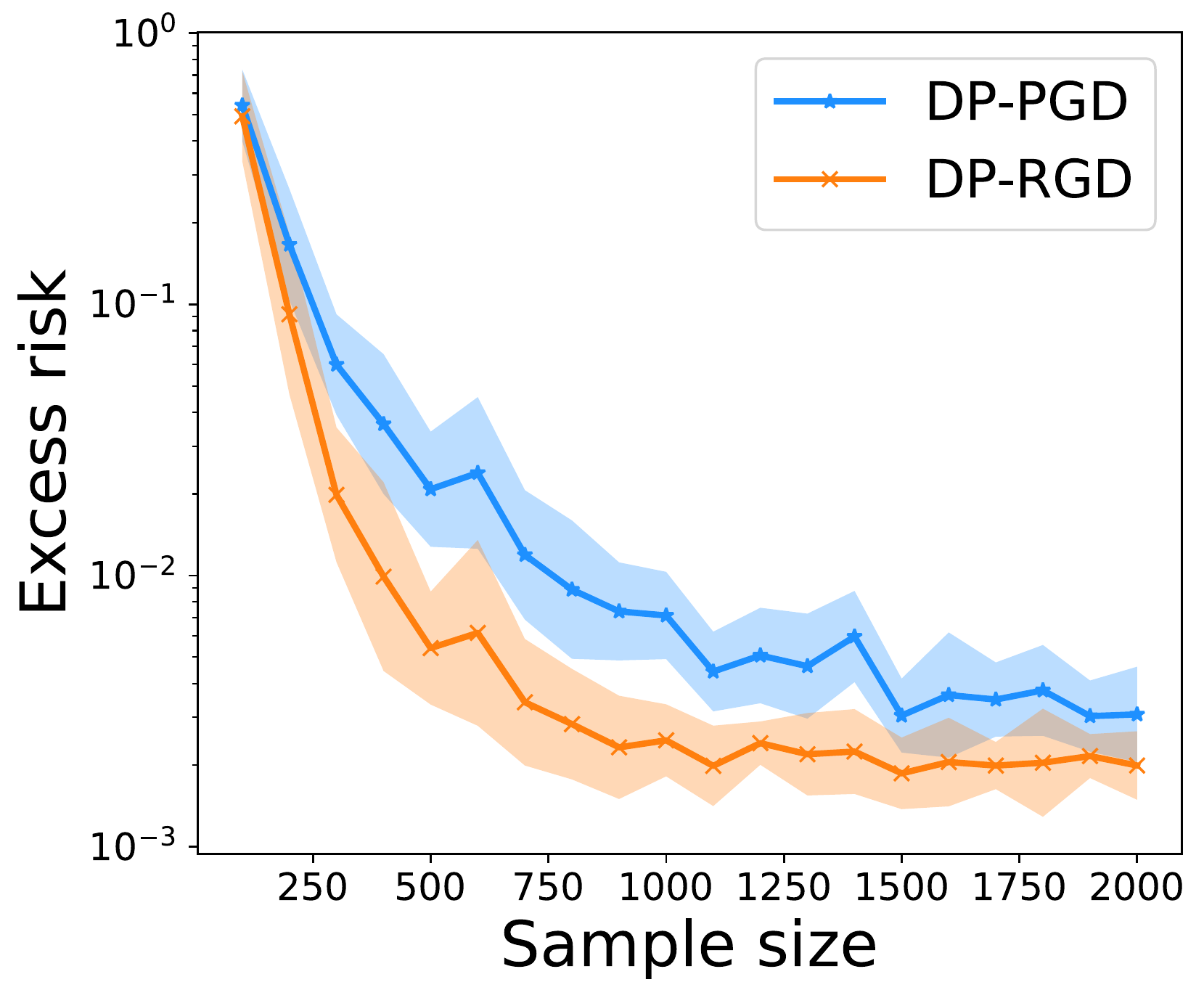}}
    \hspace*{0.3in}
    \subfloat[Fr\'echet mean \label{spd_fig}]{\includegraphics[width = 0.3\textwidth]{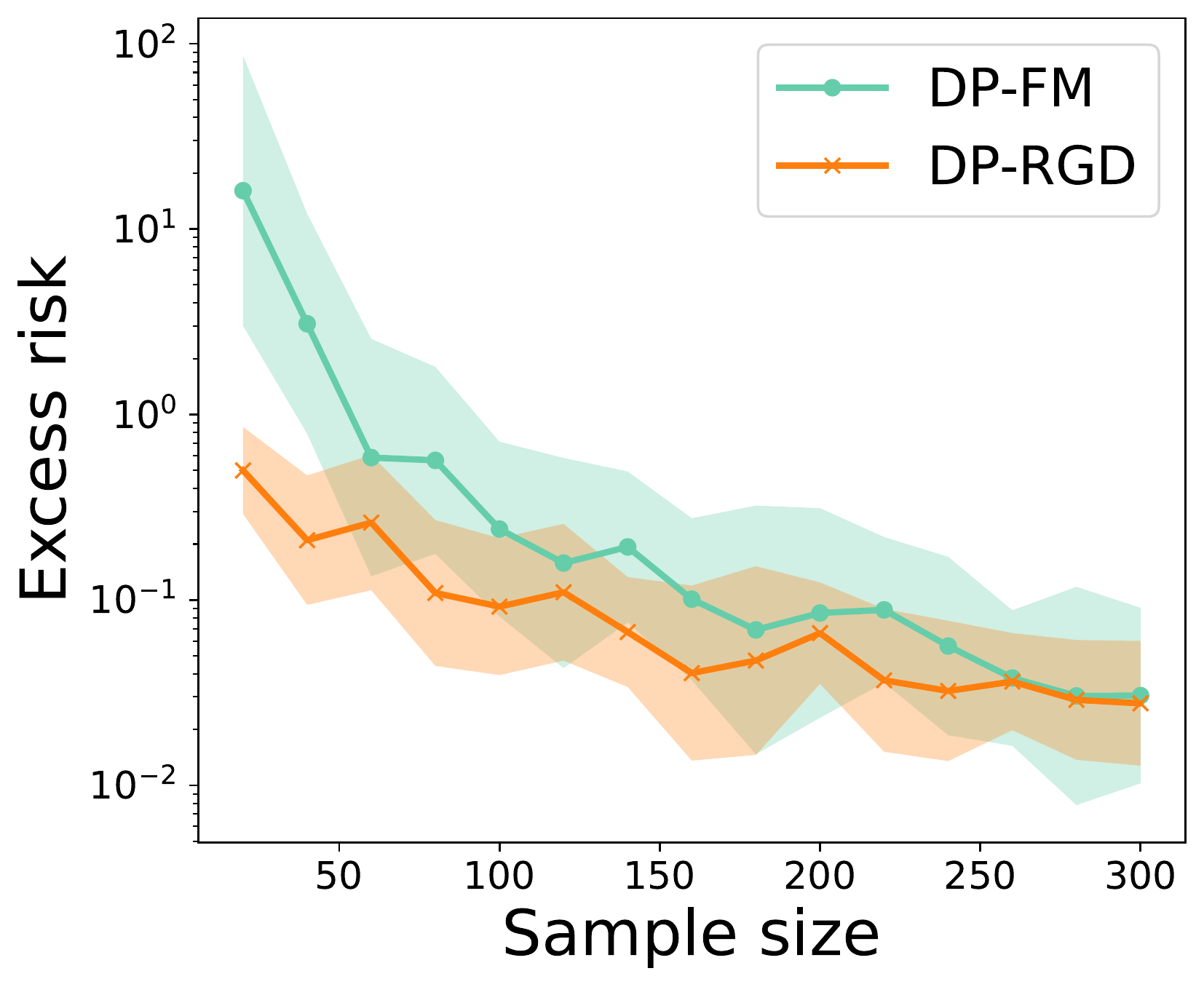}}
\caption{Expected empirical excess risk against sample size on leading eigenvector computation and Fr\'echet mean computation, both averaged over 20 runs. We observe an improved utility guarantees of our proposed DP-RGD for both problems.}
\label{pca_spd_fig}
\end{figure}

\section{Concluding remarks}
We propose a general framework to ensure differential privacy for ERM problems in the Riemannian optimization setting. We develop a general strategy to add noise that adheres to the intrinsic Riemannian geometry. 
%The framework is underpinned by Riemannian optimization as well as a generalized strategy for adding noise that adheres to intrinsic manifold geometry. 
To this end, we generalize the Gaussian mechanism to the tangent space compatible with the Riemannian metric. We also prove privacy as well as utility guarantees for a differentially private version of Riemannian (stochastic) gradient descent method. Finally, we highlight that the generalized Gaussian mechanism as well as the analysis toolkit in this paper allows to safeguard differential privacy on Riemannian manifolds beyond the context of Riemannian optimization as long as the operations are defined on tangent space. 

% One example is geodesic regression \cite{thomas2013geodesic,fletcher2016probabilistic}, which allows to model a linear model from the features to the manifold.

\bibliographystyle{plain}
%\bibliography{references}

\appendix

\section{Proofs}

\subsection{Proofs from Section \ref{diff_privacy_manifold_sect}}
\label{proof_for_manifolddp_appendix}

\begin{proof}[Proof of Proposition \ref{gaussian_mech_manifold}]
% Here we generalize the proof for Gaussian mechanism on the Euclidean space in \cite{dwork2014algorithmic} by identifying the tangent space as the Euclidean space. 
% Denote the vectorization of $\gR(D), h(D), \xi$ as $\vec{\gR}(D), \vec{h}(D), \vec{\xi}$ respectively in the normal coordinates. 
For any $\xi \sim \gN_w(0, \sigma^2)$, we have $\gR(D) \sim \gN_w(H(D), \sigma^2)$. Set $H(D) - H(D') = \zeta$
%\textcolor{blue}{Are these small $h$ the capital one?} 
and let the privacy loss random variable \cite{dwork2010boosting,dwork2014algorithmic} be defined as 
\begin{align*}
    \gL_{\gR(D) \| \gR(D')} = \log \left( \frac{p(\gR(D) = \gR(D))}{p(\gR(D') = \gR(D))} \right)  &=  \log \left( \frac{\exp(- \frac{1}{2\sigma^2} \| \xi \|_w^2)}{\exp(- \frac{1}{2\sigma^2} \| \xi + \zeta \|^2_w)} \right) \\
    &= \frac{1}{2\sigma^2}  ( 2 \langle \zeta, \xi \rangle_w + \| \zeta \|_w^2).
\end{align*}
By \cite[Lemma 3.17]{dwork2014algorithmic}, it suffices to show $|\gL_{\gR(D) \| \gR(D')}| \leq \epsilon$ with probability $1-\delta$. To see this, we first notice that $\gL_{\gR(D)\|\gR(D')} \sim \gN\big(\frac{\| \zeta \|_w^2}{2\sigma^2}, \frac{\| \zeta \|^2_w}{\sigma^2} \big)$ (by vectorization as in Remark \ref{rmk_Gaussian}). Then we can write $\gL_{\gR(D) \| \gR(D')} =  \frac{\| \zeta \|_w}{\sigma^2} v + \frac{\| \zeta \|^2_w}{2\sigma^2}$, where $v \sim \gN(0, \sigma^2)$. This leads to $|\gL_{\gR(D) \| \gR(D')}| \leq  | \frac{1}{2\sigma^2} ({2\Delta_h} v - {\Delta_h^2} ) |$. The rest of the proof directly follows from \cite[Theorem A.1]{dwork2014algorithmic} by applying the tail bound of Gaussian distribution.
\end{proof}

\begin{proof}[Proof of Lemma \ref{moments_bound_full}]
First, recall that the R\'enyi divergence \cite{renyi1961measures} between two distributions $P, Q$ with parameter $\beta \geq 2$ is defined as ${\rm Div}_\beta(P \| Q) := \frac{1}{\beta - 1} \log \sE_{o \sim P} \big[ (\frac{P(o)}{Q(o)})^{\beta -1} \big]$. Since $\gR(D) \sim \gN_w(H(D), \sigma^2)$, $\gR(D') \sim \gN_w (H(D'), \sigma^2)$, we obtain $K_{\gR, (D, D')}(\lambda) = \lambda {\rm Div}_{\lambda + 1}(\gN_w(H(D), \sigma^2) \| \gN_w(H(D') , \sigma^2))$. By standard results on R\'enyi divergence between multivariate Gaussian distributions, such as \cite{pardo2018statistical}, we have
\begin{equation*}
    K_\gR(\lambda) = \sup_{D \sim D'} \frac{\lambda (\lambda +1)}{2 \sigma^2} \| H(D) - H(D') \|_w^2 \leq \sup_{D \sim D'} \frac{\lambda(\lambda +1)}{2n^2\sigma^2} \| h(z_j) - h(z_j') \|_w^2 \leq \frac{2 \lambda (\lambda + 1) L_0^2}{n^2 \sigma^2},
\end{equation*}
where we assume without loss of generality, $D, D'$ differ only in sample $j$ and the last inequality follows from the Lipschitzness of $h$. 
\end{proof}

\begin{proof}[Proof of Lemma \ref{moments_bound_subsample}]
The proof is a combination of Lemma \ref{moments_bound_full} and the subsampling theorem (\cite[Theorem 9]{wang2019subsampled}) for R\'enyi differential privacy. From Lemma \ref{moments_bound_full} and the definition of R\'enyi differential privacy, we see the mechanism $\gR$ applying to $D_{\rm sub}$ is $(\lambda + 1, \frac{2\lambda L_0^2}{b^2 \sigma^2})$-RDP. Let $\rho(\lambda) = \frac{2\lambda L_0^2}{b^2 \sigma^2}$. Then by the subsampling theorem \cite[Theorem 9]{wang2019subsampled}, we have
\begin{align*}
    K_\gR(\lambda) \leq \log \Big(1 + \frac{b^2}{n^2} \binom{\lambda+1}{2} &\min \Big\{ 4(\exp(\rho(1)) -1 ), 2 \exp(\rho(1)) \Big\} \\
    &+ \sum_{j = 3}^{\lambda+1} \big( \frac{b}{n}\big)^j 2 \exp \big((j-1)\rho(j-1) \big)  \Big).
\end{align*}
By choosing parameters $\sigma^2, b$ and ensuring $\lambda$ is small, it is possible to show $K_\gR(\lambda) \leq  \frac{15 L_0^2 (\lambda+1)}{n^2 \sigma^2}$ as in \cite[Lemma 3.7]{wang2019efficient}, where we use the fact that sensitivity without subsampling is $\frac{4L_0^2}{b^2}$. 
\end{proof}

\subsection{Proofs from Section \ref{diff_private_riemannian_opt_sect}}
\label{proof_for_algorithm_appendix}

\begin{proof}[Proof of Theorem \ref{privacy_guarantee_theorem}]
Let ${\gR}_t(D) = w_{t+1} = {\rm Exp}_{w_t}(-\eta_t \tilde{\gR}_t(D))$, where $\tilde{\gR}_t(D) =\zeta_t  = \frac{1}{b} \sum_{z \in \gB} \grad f(w_t; z) + \epsilon_t$. The final output $\gR(D)$ is a composition of $\gR_T, ..., \gR_1$. First, from the post-processing lemma \cite[Proposition 7.1]{dwork2014algorithmic}, the differential privacy properties of $\gR_t$ is equivalent to that of $\tilde{\gR}_t$. To show privacy guarantees of $\tilde{\gR}_t$, we consider gradient descent and stochastic gradient descent separately. 

Under the setting of gradient descent, i.e., $b = n$, we have $K_{{\gR}_t}(\lambda) \leq \frac{2\lambda (\lambda+1) L_0^2}{n^2\sigma^2}$ by Lemma \ref{moments_bound_full}. Then from the composability theorem \cite[Theorem 2.1]{abadi2016deep}, we have after $T$ iterations, $K_{\gR}(\lambda) \leq \sum_{t=1}^T K_{\gR_t}(\lambda) = \frac{2\lambda(\lambda+1)TL_0^2}{n^2\sigma^2}$. To show $(\epsilon, \delta)$-differential privacy of $\gR$, based on Proposition \ref{prop_rdp_dp}, it is sufficient to show 
\begin{equation*}
    \frac{2\lambda(\lambda+1)TL_0^2}{n^2\sigma^2} \leq \frac{\lambda \epsilon}{2}, \text{ and } \exp \Big(-\frac{\lambda\epsilon}{2} \Big) \leq \delta.
\end{equation*}
Similar as in \cite{abadi2016deep,wang2017differentially}, when $\sigma^2 \geq c_1 \frac{T\log(1/\delta) L_0^2}{n^2 \epsilon^2}$ for some constant $c_1$, we can guarantee the conditions. And hence, the algorithm satisfies $(\epsilon, \delta)$-diffential privacy.

Under the setting of stochastic gradient descent, i.e., $b < n$, we can bound $K_{\gR_t}(\lambda)$ as in Lemma \ref{moments_bound_subsample}. 
% Here in the case of small $\lambda$ and large $\sigma$, the bound simplifies to $O(\frac{\lambda L_0^2}{n^2\sigma^2})$. Specifically let $\lambda^*$ satisfy $\frac{b}{n} (\lambda^* + 1) \exp( \alpha(\lambda^*) ) \approx 1$ where $\alpha(\lambda) = \frac{2 \lambda L_0^2}{b^2 \sigma^2}$ is the R\'enyi differential privacy parameter of the mechanism on the subsampled data (Lemma \ref{moments_bound_full}). Then based on the analysis in \cite{pardo2018statistical}, for $\lambda \in [1, \lambda^*]$, the bound reduces to $O(\frac{\lambda b^2}{n^2} \exp( \alpha(1) -1)) = O(\frac{\lambda L_0^2}{n^2\sigma^2})$ for large enough $\sigma^2$. 
After $T$ iterations, the cumulative bound is $\frac{15T(\lambda+1)L_0^2}{n^2\sigma^2}$ under sufficiently large $\sigma^2$. Based on similar reasoning as in the full gradient case, we see when $\sigma^2 \geq c_2 \frac{ T\log(1/\delta)L_0^2}{n^2\epsilon^2}$, the algorithm is $(\epsilon, \delta)$-differentially private.
\end{proof}

\begin{proof}[Proof of Lemma \ref{lemma_bound}]
First, using unbiasedness of $\grad f(w;z)$, we have $\sE[\zeta] = \grad F(w)$. Consider the vectorization $\vec{\epsilon}$ and the tangent space Gaussian distribution expressed as a standard multivariate Gaussian. Thus, $\sE \| \epsilon\|_w^2 = \sE \| \vec{\epsilon} \|^2_2 = \sigma^2 \trace(G_w^{-1}) \leq d \sigma^2 c_l^{-1}$. Finally, we have
\begin{equation*}
    \sE \| \zeta \|_w^2 = \sE \| \frac{1}{b} \sum_{z \in \gB} \grad f(w; z)\|_w^2 + \sE \| \epsilon \|^2_w \leq L_0^2 + d\sigma^2 c_l^{-1},
\end{equation*}
where we notice $\sE [\langle \grad f(w, z) , \epsilon \rangle_w] = 0$ as the noise is independent.
\end{proof}

\begin{proof}[Proof of Theorem \ref{g_convex_theorem}]
The proof is adapted from the proof of \cite[Theorem 10]{zhang2016first}. Since each $f(w; z)$ is geodesic convex, then $F(w) = \frac{1}{n} \sum_{i=1}^n f(w; z_i)$ is geodesic convex. Then from the first-order characterization of geodesic convex functions \cite{boumal2020introduction}, we have $F(w_t) - F(x^*) \leq -   \langle \grad F(w_t), {\rm Exp}_{w_t}^{-1}(w^*) \rangle_{w_t}$. Further by Lemma \ref{curvature_lemma} on a geodesic triangle formed by $w_t, w_{t+1}, w^*$, we obtain
\begin{align}
    \sE_t[d^2(w_{t+1}, w^*)] -  d^2(w_t, w^*) &\leq \varsigma \eta_t^2 \sE_t \| \zeta_t \|^2_{w_t} + 2 \eta_t \langle \grad F(w_t), {\rm Exp}^{-1}_{w_t}(w^*) \rangle_{w_t}, \nonumber \\
    &\leq  \varsigma \eta_t^2 (L_0^2 + d\sigma^2 c_l^{-1}) + 2 \eta_t \langle \grad F(w_t), {\rm Exp}^{-1}_{w_t}(w^*) \rangle_{w_t} \label{eq1_g_convex}
\end{align}
where we use Lemma \ref{lemma_bound} and notice that ${\rm Exp}^{-1}_{w_t}(w_{t+1}) = - \eta_t \zeta_t$. The expectation is over both the randomness in the noise and the subsampling (if $b < n$) at iteration $t$. 
Combining \eqref{eq1_g_convex} and the first order characterization yields
\begin{align*}
    F(w_{t}) - F(w^*) &\leq  - 2 \eta_t \langle \grad F(w_t), \sE[ \zeta_t ] \rangle_{w_t}\\
    &\leq \frac{1}{2\eta_t} \Big( d^2(w_t, w^*) - \sE_t [d^2(w_{t+1}, w^*)] \Big) + \frac{\varsigma \eta_t (L_0^2 + d \sigma^2 c_l^{-1})}{2}
\end{align*}
where the expectation is over the randomness of both the noise and subsampling (if $b < n$). Telescoping this inequality from $t = 0, ..., T-1$ and setting $\eta_t = \frac{D_\gW}{\sqrt{(L_0^2 + d \sigma^2 c_l^{-1}) \varsigma T}}$ gives
\begin{align*}
    \frac{1}{T} \sum_{t = 0}^{T-1} \sE[F(w_t)] - F(w^*) &\leq \frac{\sqrt{(L_0^2 + d \sigma^2 c_l^{-1}) \varsigma}}{2 \sqrt{T} D_\gW} \sE \Big( d^2(w_0, w^*) - d^2(w_T, w^*) \Big) + \frac{\varsigma \eta_t (L_0^2 + d \sigma^2 c_l^{-1})}{2} \\
    &\leq D_\gW \sqrt{\frac{(L_0^2 + d \sigma^2 c_l^{-1}) \varsigma}{T}} 
\end{align*}
where we take expectation over all iterations. Finally, it can be shown from the property of geodesic averaging that $F(w^{\rm priv}) \leq \frac{1}{T} \sum_{t=0}^{T-1} F(w_t)$.
\end{proof}

\begin{proof}[Proof of Theorem \ref{g_strongly_convex_theorem}]
The proof adapts from \cite[Theorem 12]{zhang2016first} and follows similarly as in Theorem \ref{g_convex_theorem}. %We thus omit here.
\end{proof}

\begin{proof}[Proof of Theorem \ref{PL_condition_utility_theorem}]
First consider the gradient descent with $\zeta_t = \grad F(w_t) + \epsilon_t$. Then by geodesic $L_1$-smooth, we have 
\begin{align*}
    \sE_{\epsilon_t} [F(w_{t+1})] &\leq F(w_t) - \eta_t \| \grad F(w_t) \|^2_{w_t} + \frac{L_1 \eta_t^2}{2} \sE_{\epsilon_t} \| \grad F(w_t) + \epsilon_t \|^2_{w_t} \\
    &\leq F(w_t) - (\eta_t - \frac{L_1 \eta_t^2}{2}) \| \grad F(w_t) \|_{w_t}^2 + \frac{L_1 \eta_t^2 d \sigma^2 c_l^{-1}}{2} \\
    &\leq F(w_t) - \frac{\eta_t}{2} \| \grad F(w_t) \|_{w_t}^2 + \frac{L_1 \eta^2_t d \sigma^2 c_l^{-1}}{2 },
\end{align*}
where the last inequality uses $\eta_t \leq 1/L_1$. From the PL condition, we have 
\begin{equation*}
    \sE_{\epsilon_t}[F(w_{t-1})] - F(w^*) \leq \Big(1 - \frac{\tau \eta_t }{2} \Big)( F(w_t) - F(w^*) ) + \frac{L_1 \eta^2_t d \sigma^2 c_l^{-1}}{2 }.
\end{equation*}
Applying this result recursively by choosing $\eta_t = \eta < \min\{ 1/L_1, 1/\tau \}$ and taking full expectation yields
\begin{align*}
    \sE[F(w_T))] - F(w^*) &\leq \Big( 1 - \frac{\tau\eta}{2} \Big)^T \big( F(w_0) - F(w^*) \big) + \frac{L_1 \eta^2 d \sigma^2 c_l^{-1}}{2 } \sum_{t = 0}^T (1 - \frac{\tau \eta}{2})^t \\
    &\leq \Big( 1 - \frac{\tau\eta}{2} \Big)^T \big( F(w_0) - F(w^*) \big) + \frac{L_1 \eta^2 d \sigma^2 c_l^{-1}}{2 } \sum_{t = 0}^\infty (1 - \frac{\tau \eta}{2})^t \\
    &\leq \Big( 1 - \frac{\tau\eta}{2} \Big)^T \big( F(w_0) - F(w^*) \big) + \frac{d \sigma^2 c_l^{-1}}{2 \tau} 
\end{align*}
where the last inequality follows from the limit of a geometric series and $\eta < \frac{1}{L_1}$. Finally, choosing $T = \log(\frac{n^2\epsilon^2}{d L_0^2 \log(1/\delta) c_l^{-1}})$ gives the desired result. 

Now we consider stochastic gradient descent with $\zeta_t = \frac{1}{b}\sum_{z \in \gB_t} \grad f(w_t; z) + \epsilon_t$. Similarly by geodesic $L_1$-smooth, we obtain
\begin{align*}
    &\sE_{\gB_t, \epsilon_t}[F(w_{t+1})] \\
    &\leq F(w_t) - \eta_t \| \grad F(w_t) \|^2_{w_t} + \frac{L_1 \eta_t^2}{2} \sE_{\gB_t, \epsilon_t}\| \frac{1}{b} \sum_{z \in \gB_t} \grad f(w_t; z) + \epsilon_t \|_{w_t}^2 \\
    &\leq F(w_t) - \eta_t \| \grad F(w_t) \|^2_{w_t} + \frac{L_1 \eta_t^2}{2} \sE_{\gB_t} \| \frac{1}{b} \sum_{z \in \gB_t} \grad f(w_t; z) \|^2_{w_t} + \frac{L_1 \eta_t^2 d \sigma^2 c_l^{-1}}{2} \\
    &\leq F(w_t) -  \tau \eta_t ( F(w_t) - F(w^*) ) + \frac{L_1 L_0^2 + L_1 d\sigma^2 c_l^{-1} }{2} \eta_t^2.
\end{align*}
Let $\eta_t = \eta < \min\{ 1/L_1, 1/\tau \}$, and apply the result recursively. Then we have 
\begin{align*}
    \sE[F(W_T)] - F(w^*) &\leq (1 - \tau \eta)^T (F(w_0) - F(w^*)) + \frac{L_1L_0^2 + L_1 d \sigma^2 c_l^{-1}}{2} \eta^2 \sum_{t=0}^T (1 - \tau \eta)^t \\
    &\leq (1 - \tau \eta)^T (F(w_0) - F(w^*)) + \frac{L_0^2 + d \sigma^2 c_l^{-1}}{2\tau} \\
    &\leq O \Big( \frac{\tau^{-1} d \log(n) \log(1/\delta) c_l^{-1} L_0^2 )}{n^2\epsilon^2} (F(w_0) - F(w^*)) \Big)
\end{align*}
under the same choice of $T$. This matches the bound under full gradient setting. 
\end{proof}

\begin{proof}[Proof of Theorem \ref{general_nonconvex_theorem}]
First consider gradient descent with $\zeta_t = \grad F(w_t) + \epsilon_t$. Then by geodesic $L_1$-smooth, we obtain
\begin{align*}
    \sE_{\epsilon_t}[F(w_{t+1})] &\leq F(w_t) - (\eta_t - \frac{L_1\eta_t^2}{2}) \| \grad F(w_t) \|^2_{w_t} + \frac{L_1 \eta_t^2 d \sigma^2 c_l^{-1}}{2} \\
    &= F(w_t) - \frac{1}{2L_1} \| \grad F(w_t) \|_{w_t}^2 + \frac{d \sigma^2 c_l^{-1}}{2 L_1}
\end{align*}
where we choose $\eta_t= \eta = 1/L_1$. Telescoping this inequality for $t = 0, ..., T-1$ and taking expectation yields
\begin{align*}
    \frac{1}{T} \sum_{t=0}^{T-1} \sE \| \grad F(w_t) \|^2_{w_t}  &\leq \frac{2 L_1}{T} (F(w_0) - \sE[F(w_T)]) + {d\sigma^2 c_l^{-1}} \\
    &\leq O \Big( \frac{\sqrt{d L_1 \log(1/\delta) c_l^{-1}} L_0}{n\epsilon} \Big)
\end{align*}
by the choice of $\sigma^2$ and $T = \frac{\sqrt{L_1} n \epsilon}{\sqrt{d \log(1/\delta) c_l^{-1}}L_0}$. Given the output $w^{\rm priv}$ is uniformly selected from $\{ w_0, ..., w_{T-1}\}$, we have $\sE\| \grad F(w^{\rm priv}) \|^2 = \frac{1}{T} \sum_{t=0}^{T-1} \sE \| \grad F(w_t) \|^2_{w_t}$. 

Similarly, under stochastic setting, $\zeta_t = \frac{1}{b}\sum_{z \in \gB_t} \grad f(w_t; z) + \epsilon_t$ and 
\begin{align*}
    \sE_{\epsilon_t}[F(w_{t+1})] \leq 
    F(w_t) - \frac{1}{L_1}  \| \grad F(w_t) \|^2_{w_t} + \frac{ L_0^2 + d \sigma^2 c_l^{-1}}{2 L_1},
\end{align*}
where we use the fact that $\eta_t = \frac{1}{L}$. Following the same argument and choice of $\sigma^2$ and $T$, we achieve the same bound as full gradient case.
\end{proof}

\end{document}